\theoremstyle{plain}
\newtheorem{thm}{Theorem}
\newtheorem{prop}[thm]{Proposition}
\newtheorem{ques}[thm]{Question}
\theoremstyle{definition}
\newtheorem{rem}[thm]{Remark}
\numberwithin{equation}{section}
\def\dist{\operatorname{dist}}
\def\supp{\operatorname{supp}}
\def\eps{\varepsilon}
\def\kap{\varkappa}
\def\R{\mathbb{R}}
\def\wh{\widehat}
\def\wt{\widetilde}
\def\spec{\operatorname{spec}}
\def\S{\mathbb{S}}
\def\kap{\varkappa}
\def\Z{\mathbb{Z}}
\def\N{\mathbb{N}}
\def\N{\mathbb{N}}
\def\U{\mathcal{U}}
\def\XXint#1#2#3{{\setbox0=\hbox{$#1{#2#3}{\int}$}
     \vcenter{\hbox{$#2#3$}}\kern-.5\wd0}}
\newcommand{\ds}{(-\Delta+1)^{s/2}}
\newcommand{\dss}{(-\Delta+1)^{s/4}}
\newcommand{\dssi}{(-\Delta+1)^{-s/4}}
\newcommand{\cala}{\mathcal{A}}
\newcommand{\lip}{\langle}
\newcommand{\rip}{\rangle}
\newcommand{\hls}{{H^{s/2}\times L^2}}
\DeclareMathOperator{\re}{Re}
\title[Geometric control]{Uncertainty principles associated to sets satisfying the Geometric Control Condition}
\author{Walton Green}
\address{School of Mathematical and Statistical Sciences, Clemson University, Clemson, SC, USA}
\email{awgreen@clemson.edu}
\author{Benjamin Jaye}
\email{bjaye@clemson.edu} 
\thanks{B.J. supported by NSF through DMS-1847301 and DMS-1800015.}
\author{Mishko Mitkovski}
\email{mmitkov@clemson.edu}
\thanks{M.M. supported by NSF through DMS-1600874.}
\begin{document}
\maketitle

\begin{abstract}
In this paper, we study forms of the uncertainty principle suggested by problems in control theory. We obtain a version of the classical Paneah-Logvinenko-Sereda theorem for the annulus. More precisely, we show that a function with spectrum in an annulus of a given thickness can be bounded, in $L^2$-norm, from above by its restriction to a neighborhood of a  GCC set, with constant independent of the radius of the annulus. We apply this result to obtain energy decay rates for damped fractional wave equations, extending the work of Malhi and Stanislavova to both the higher-dimensional and non-periodic setting.
\end{abstract}

\section{Introduction}

The goal of this paper is to investigate versions of the uncertainty principle suggested by control theory for PDEs. There is a long history of the relationship between these two fields beginning with Riesz sequence problems for non-harmonic Fourier series and their application to both wave and heat equations by the so-called ``moment method'' of D.~L. Russell \cite{russell67,avdonin95,jaffard90}. More recently, inequalities of the uncertainty principle type have found application to control theory on unbounded domains and compact Riemannian manifolds of negative curvature \cite{egidi18,lebeau19,green20decay,bourgain18,dyatlov2016spectral,dyatlov2018semiclassical,jin18,jaye18,han18}

Fix $k\in \{1, \dots, d\}$.  For $\ell>0$ and $\gamma>0$, a set $E\subset \R^d$ satisfies the $k$-dimensional $(\ell,\gamma)$-\emph{geometric control condition} (GCC) if for any $k$-dimensional cube $Q\subset \R^d$ of side-length at least $\ell$,
$$\mathcal{H}^k(Q\cap E)\geq \gamma \ell^k,
$$
where $\mathcal{H}^k$ denotes the $k$-dimensional Hausdorff measure (which is just the $k$-dimensional Lebesgue measure on the $k$-dimensional plane containing $Q$). We say that $E$ satisfies the $k$-GCC if it satisfies the $k$-dimensional $(\ell,\gamma)$-GCC for some $\ell>0$ and $\gamma$.\\

Fubini's theorem ensures that if $E$ satisfies the $k$-GCC, then $E$ satisfies the $\ell$-GCC for $\ell\geq k$.  The two extreme cases, $k=1$ and $k=d$, have a substantial literature:\\

The 1-GCC arises in the study of control theory for hyperbolic equations in the work of Bardos, Lebeau, Rauch, Taylor and Phillips \cite{b-l-r,rauch74}. Given a Laplacian, an open set $\omega$ satisfies the GCC if for some $T>0$, the Hamiltonian flow always intersects $[0,T] \times \omega$. Choosing the Laplacian to be $-\Delta$ on $\R^d$, and removing the regularity condition on $\omega$, this simplifies to the above condition with $k=1$.\\

On the other hand, when $k=d$, we recover the definition of relatively dense, or thick, sets which are characterized by the Paneah-Logvinenko-Sereda (PLS) theorem \cite{paneah61,logvinenko74,kovrijkine01} as sets $E$ for which 
\begin{equation}\label{eq1}\|f\|_{L^p(\R^d)} \le C \|f\|_{L^p(E)}\end{equation}
for all $f$ satisfying $\spec (f) \subset B$. Here $\spec(f)$ denotes the support of the Fourier transform of $f$ and $B$ is a $d$-dimensional ball of fixed radius. The precise quantitative dependence of the above constant $C$ on the parameters of $E$ and the radius of $B$ is given by Kovrijkine in~\cite{kovrijkine01}, where it was shown that $C$ depends exponentially on the radius of the ball $B$. \textcolor{black}{Developing variants of the PLS theorem in different settings is an active area of research \cite{hartmann2020quantitative,egidi2020scale,hartmann2021dominating,ghobber13,ortega2013carleson}}\\

Our main goal is to extend the PLS theorem to functions with Fourier support in a spherical shell of a fixed width. The main point is that the constant in the corresponding inequality is now dependent only on the width of the spherical shell and not on its radius. For a set $A\subset \R^d$, and $\delta>0$, $\U_{\delta}(A)$ denotes the open $\delta$-neighborhood of $A$ in $\R^d$.


\begin{thm}\label{sphere}
Let $E$ satisfy the $1$-GCC. For any $\beta,\delta>0$, there exists $C>0$ such that
	\begin{equation}\label{eq:ann} \|f\|_{L^2(\R^d)} \le C\|f\|_{L^2(\U_\delta(E))} \end{equation}
whenever $f \in L^2(\R^d)$ satisfies $\spec f \subset \{ \xi \in \R^d: R-\beta \le |\xi| \le R+\beta \}$ for some $R>0$.
\end{thm}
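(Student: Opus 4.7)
My plan is to reduce the annular estimate to the classical one-dimensional Logvinenko-Sereda theorem by decomposing $f$ in frequency into small caps of the annulus whose radial thickness is comparable to $\beta$, and then applying the $1$D inequality along lines in the radial direction of each cap.

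First, cover the annulus $\mathcal{A}_R := \{\xi\in\R^d: R-\beta\le|\xi|\le R+\beta\}$ by caps $T_\alpha$ centered at points $\xi_\alpha=Re_\alpha$ (with $|e_\alpha|=1$), chosen with angular radius small enough (taking $\sim \sqrt{\beta/R}$ suffices) that each $T_\alpha$ has radial extent at most $O(\beta)$. Pick a smooth partition of unity $\{\psi_\alpha^2\}$ subordinate to $\{T_\alpha\}$ with $\sum_\alpha \psi_\alpha^2\equiv 1$ on $\mathcal{A}_R$ and with uniformly bounded overlap, set $\widehat{f_\alpha}=\psi_\alpha\widehat{f}$, and demodulate by writing $f_\alpha(x)=e^{i\xi_\alpha\cdot x}g_\alpha(x)$, so that $\spec g_\alpha$ is contained in a set of diameter $O(\beta)$ in the direction $e_\alpha$ (and of some larger diameter in the transverse directions, which will not matter).

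The crucial observation is that along every line $L$ parallel to $e_\alpha$, the slice $g_\alpha|_L$ is a one-variable bandlimited function with $1$D spectrum in an interval of length $O(\beta)$, since the Fourier transform of the slice is supported in the projection of $\spec g_\alpha$ onto $\R e_\alpha$. The $1$-GCC hypothesis guarantees that $E\cap L$ is $(\ell,\gamma)$-thick in $L$ for every such line. The classical one-dimensional PLS theorem therefore applies, uniformly in $L$ and $\alpha$, to give
\begin{equation*}
\|g_\alpha|_L\|_{L^2(L)} \le C(\beta,\ell,\gamma)\,\|g_\alpha|_L\|_{L^2(\U_\delta(E)\cap L)}.
\end{equation*}
Integrating the square of this inequality over $e_\alpha^\perp$ via Fubini, and using $|g_\alpha|=|f_\alpha|$ pointwise, yields $\|f_\alpha\|_{L^2(\R^d)}^2\le C^2\|f_\alpha\|_{L^2(\U_\delta(E))}^2$. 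Summing, and invoking the Plancherel identity $\sum_\alpha\|f_\alpha\|_{L^2}^2=\|f\|_{L^2}^2$ coming from $\sum\psi_\alpha^2\equiv 1$ on $\spec f$, we arrive at
\begin{equation*}
\|f\|_{L^2(\R^d)}^2 \le C^2\sum_\alpha \|f_\alpha\|_{L^2(\U_\delta(E))}^2.
\end{equation*}

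I expect the main obstacle to be the remaining almost-orthogonality step: bounding $\sum_\alpha\|f_\alpha\|_{L^2(\U_\delta(E))}^2$ by $\|f\|_{L^2(\U_\delta(E))}^2$. On all of $\R^d$ this is immediate from Plancherel, but on the restricted set $\U_\delta(E)$ (which can have much smaller measure than $\R^d$) it is genuinely delicate because the Fourier cut-offs $\psi_\alpha(D)$ are non-local. My approach would be to take the $\psi_\alpha$ to be smooth bumps so that the kernels $\check\psi_\alpha$ decay rapidly on spatial tubes dual to $T_\alpha$, and to apply a Cotlar-Stein type almost-orthogonality argument to the operators $f\mapsto\chi_{\U_\delta(E)}(\check\psi_\alpha*f)$, possibly at the cost of slightly enlarging $\delta$. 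An alternative closer in spirit to Kovrijkine's original argument would be to reverse the order of analysis: first localize in space on cubes of side length comparable to the spatial wavelength of the problem, prove the cap estimate pointwise-in-cube via a Bernstein-plus-Remez argument adapted to the annular spectrum, and then sum over cubes using the relative density of $E$ furnished by the $1$-GCC.
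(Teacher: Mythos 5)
You correctly identify both the geometry (decompose the annulus into caps of radial/transverse scales $\beta\times\sqrt{\beta R}$ and exploit $1$-GCC by radial slicing) and the genuine difficulty (recovering $\|f\|^2_{L^2(\U_\delta(E))}$ from $\sum_\alpha\|f_\alpha\|^2_{L^2(\U_\delta(E))}$). However, the Cotlar--Stein route you propose for that last step cannot work as stated, because it is circular. An almost-orthogonality bound for the operators $T_\alpha\colon f\mapsto \chi_{\U_\delta(E)}(\check\psi_\alpha*f)$ would yield $\sum_\alpha\|T_\alpha f\|^2\lesssim\|f\|_{L^2(\R^d)}^2$ --- and $\|f\|_{L^2(\R^d)}^2$ is the very quantity you are trying to bound, not an input. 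The obstruction is structural: since $\check\psi_\alpha$ is nowhere compactly supported, $f_\alpha|_{\U_\delta(E)}$ depends on all of $f$, not just on $f|_{\U_{\delta'}(E)}$, so no enlargement of $\delta$ lets you replace $f$ by $f\chi_{\U_{\delta'}(E)}$ before summing.

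The paper's resolution (see the proof of Theorem~\ref{kdimprop}) replaces the frequency partition of unity by convolution with a fixed function $\varphi\in C^\infty_0(B(0,\delta))$ \emph{with compact spatial support}, translated in frequency to each patch center $\xi_\ell$: $g_\ell = f*\mathcal F^{-1}\widehat\varphi(\cdot-\xi_\ell)$. Because $\varphi$ vanishes outside $B(0,\delta)$, one has the exact identity $g_\ell = (f\chi_{\U_\delta(E)})*\mathcal F^{-1}\widehat\varphi(\cdot-\xi_\ell)$ on $E$. This is what makes the summation legal: $\sum_\ell\|g_\ell\|_{L^2(E)}^2$ is bounded via Plancherel applied to $f\chi_{\U_\delta(E)}$ together with $\sum_\ell|\widehat\varphi(\xi-\xi_\ell)|^2\lesssim 1$, producing the $\|f\|_{L^2(\U_\delta(E))}^2$ term directly. (The price is a tail term coming from $|\widehat\varphi|$ not being compactly supported, but the rapid decay of $\widehat\varphi$ makes this tail a small multiple of $\|f\|_{L^2(\R^d)}^2$, which is absorbed.) This compact-spatial-support trick is precisely why the conclusion involves $\U_\delta(E)$ rather than $E$ itself, which your write-up does not exploit. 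Your fallback suggestion (spatial localization plus Bernstein--Remez, \`a la Kovrijkine) is in the right spirit --- the paper does use Bernstein--Remez inside its strip theorem (Proposition~\ref{PLSprop}) --- but without the compact-spatial-support frequency decomposition above, you have no mechanism to assemble the patch estimates into the final inequality.
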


To reiterate, the novelty of this result is that \emph{the inequality (\ref{eq:ann}) holds with a constant $C$ independent of $R$}. If we don't care about the form of the constant $C$ this inequality is, of course, a direct consequence of the PLS Theorem (by placing the annulus $\{\bigr| |\xi|-R\bigr| \le \beta \}=\U_\beta(R\S^{d-1})$ inside a ball of radius $R+\beta$, but doing so yields a bound in (\ref{eq:ann}) which grows exponentially with $R$).  Our method proves an explicit bound for $C$ in terms of the $\beta$ and $\delta$, and the GCC parameters $\ell$ and $\gamma$.

Since the sphere is a $(d-1)$-dimensional manifold, Theorem \ref{sphere} does not hold if we replace a $1$-GCC set $E$ with a $k$-GCC set for any $k> 1$.  The following theorem is our main result, which shows that, for $k\in \{1,\dots, d-1\}$, if the set $E$ is $k$-GCC, then the spherical shell in Theorem~\ref{sphere} can be replaced by a neighborhood of any compact, $C^{1}$-smooth, $(d-k)$-dimensional submanifold. Again the main point is that the constant $C$ only depends on the width of the neighborhood $\beta$, and not on $R$ (see below).

\begin{thm}\label{manifold}  
Fix $k\in \{1,\dots, d-1\}$.  Suppose that $\Sigma$ is a compact, $C^{1}$-smooth, $(d-k)$-dimensional submanifold of $\R^d$.  Suppose that $E$ satisfies the $k$-GCC. Then for any $\delta>0$ and $\beta >0$, there is a constant $C>0$ such that for every $R>0$,
$$\|f\|_{L^2(\R^d)}\leq C\|f\|_{L^2(\U_{\delta}(E))}$$
whenever  $f\in L^2(\R^d)$ satisfies $\spec(f)\subset \U_{\beta}(R\Sigma)$.

\end{thm}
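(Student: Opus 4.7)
The plan is to reduce the general statement to a ``flat slab'' model case, and to handle the flat case by a Fubini argument combined with the classical multi-dimensional Paneah-Logvinenko-Sereda (PLS) theorem in $\R^k$.

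\emph{Flat case.} Suppose first that $\Sigma$ lies in an affine $(d-k)$-plane; by rotation, we may assume $\Sigma\subset\{0\}^k\times\R^{d-k}$. Then $\U_\beta(R\Sigma)$ is contained in the slab $\{(\xi_1,\xi_2)\in\R^k\times\R^{d-k} : |\xi_1|<\beta\}$. For each fixed $x_2\in\R^{d-k}$, the partial Fourier transform of $x_1\mapsto f(x_1,x_2)$ is supported in the $k$-ball $\{|\xi_1|<\beta\}$. The key geometric observation is that the $k$-GCC on $E$, applied to $k$-cubes of the form $Q\times\{x_2\}\subset\R^d$, implies that each slice $E_{x_2}=\{x_1:(x_1,x_2)\in E\}$ is a thick ($= k$-GCC) subset of $\R^k$ with the same parameters $(\ell,\gamma)$, uniformly in $x_2$. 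Applying classical (e.g. Kovrijkine) PLS in $\R^k$ gives $\|f(\cdot,x_2)\|_{L^2(\R^k)}\le C\|f(\cdot,x_2)\|_{L^2(E_{x_2})}$ with $C=C(\beta,\ell,\gamma,k)$ independent of $R$; squaring, integrating in $x_2$ via Fubini, and using $E_{x_2}\subset(\U_\delta E)_{x_2}$ yields the result with an $R$-independent constant.

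\emph{Reduction to the flat case.} For a general compact $C^1$-manifold $\Sigma$, $C^1$-compactness lets us cover $\Sigma$ by finitely many charts $V_1,\dots,V_N$ on each of which $\Sigma$ is a graph over its tangent $(d-k)$-plane $T_\alpha$ with prescribed small Lipschitz constant. To further flatten the scaled patch $RV_\alpha$ down to an $O(\beta)$-error, subdivide at the parabolic scale $\sqrt{R\beta}$ in frequency using a smooth partition of unity $\{\chi_j\}$ with finite overlap independent of $R$; on each subpatch the set $\U_\beta(R\Sigma)$ is contained in a slab of width $O(\beta)$ perpendicular to a fixed tangent plane. Setting $f_j$ by $\widehat{f_j}=\chi_j\hat f$ and applying the flat case (after rotating the tangent plane to standard position, using the rotation-invariance of the $k$-GCC) gives $\|f_j\|_{L^2(\R^d)}\le C\|f_j\|_{L^2(\U_{\delta/2}E)}$ with $C$ uniform in $j$ and $R$.

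\emph{Main obstacle: assembly.} The main difficulty will be combining these individual pieces. The bound $\|f\|_{L^2(\R^d)}^2\lesssim\sum_j\|f_j\|_{L^2(\R^d)}^2$ follows from the finite overlap of $\{\chi_j\}$, but the reverse physical-space bound $\sum_j\|f_j\|_{L^2(\U_{\delta/2}E)}^2\lesssim\|f\|_{L^2(\U_\delta E)}^2$ is subtler, because the number of sub-patches grows like $(R/\beta)^{(d-k)/2}$, so a naive pointwise or Bessel-type argument loses a factor depending on $R$. I expect to overcome this through a Cauchy-Schwarz plus almost-orthogonality argument in physical space, exploiting that each convolution kernel $\check\chi_j$ is localized at scale $\sim 1/\sqrt{R\beta}$. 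Once $R$ is large enough that $1/\sqrt{R\beta}<\delta/2$, each $\chi_j(D)$ essentially preserves the containment $\U_{\delta/2}E\subset\U_\delta E$ (up to harmless tails from rapid decay), so summation against the finite overlap of $\{\chi_j\}$ produces an $R$-independent constant. For small $R$ (below this threshold), $\hat f$ is supported in a fixed bounded ball and the statement follows directly from classical PLS. I anticipate this assembly step to be where the technical heart of the proof lies, presumably mirroring the corresponding step in the proof of Theorem~\ref{sphere}.
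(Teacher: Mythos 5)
Your outline correctly isolates the three structural pieces of the proof (a flat ``slab'' PLS theorem, a local flattening of $R\Sigma$, and an assembly step), and you are right that the flat case is a Fubini argument using the $k$-dimensional PLS inequality on slices $E_{x_2}$ --- this is in essence the paper's Proposition~\ref{PLSprop}. But the proposal has a genuine gap exactly where you flag one: the assembly step is not proved, only conjectured to work via ``Cauchy--Schwarz plus almost-orthogonality.'' As you observe, the number of patches grows with $R$, so one cannot simply lose a constant per patch; you need a mechanism that makes $\sum_j\|f_j\|_{L^2(\U_{\delta/2}E)}^2\lesssim\|f\|_{L^2(\U_\delta E)}^2$ with an $R$-independent constant, and ``harmless tails from rapid decay'' do not obviously sum against a growing number of patches. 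The paper closes this gap with a specific device in Theorem~\ref{kdimprop}: the localizers are not smooth frequency cutoffs but modulated copies of a \emph{physical-space compactly supported} bump $\varphi\in C_0^\infty(B(0,\delta))$ with $|\hat\varphi|\gtrsim 1$ on a fixed cube, so that $g_\ell:=f*\mathcal{F}^{-1}[\hat\varphi(\cdot-\xi_\ell)]$ agrees \emph{exactly} with $(f\chi_{\U_\delta(E)})*\mathcal{F}^{-1}[\hat\varphi(\cdot-\xi_\ell)]$ on $E$. Then Plancherel and the $1/2$-separation of the frequency centers $\xi_\ell$ give $\sum_\ell\|g_\ell\|_{L^2(E)}^2\lesssim\delta^{-d-1}\|f\|_{L^2(\U_\delta E)}^2$ with no almost-orthogonality loss, while the failure of $\hat\varphi$ to have compact frequency support is handled separately by a frequency cutoff at a large but \emph{fixed} radius $R$ (an error term summable because the $\xi_\ell$ are separated, giving a $1/R$ gain). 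This exact-on-$E$ identity is the missing idea in your proposal.

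Two smaller issues. First, your parabolic frequency scale $\sqrt{R\beta}$ implicitly uses quadratic flatness, which holds for the sphere (and any $C^2$ hypersurface) but not for a general $C^1$ submanifold; the paper instead \emph{fixes} the localization scale $S$ (from Theorem~\ref{kdimprop}) and uses $C^1$-compactness to produce a modulus $\sigma$ with $R\sigma(S/R)\le\beta$ for $R$ large. Second, for small $R$ you claim the result ``follows directly from classical PLS,'' but $\U_\delta(E)$ for a $k$-GCC set need not be $d$-thick (take $E=\R^k\times\{0\}$), so the $d$-dimensional PLS theorem is unavailable; one must instead observe that $\U_\beta(R\Sigma)$ lies in a strip of width $O_{\Sigma,\beta}(R)$ and invoke the strip theorem (Theorem~\ref{GCCcharac}) for $k$-GCC sets.
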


There is a history of developing sampling inequalities for band limited functions from lower-dimensional sets\cite{strichartz1989uncertainty,jaming2016uncertainty}.
Theorem \ref{manifold} continues to hold in the case $k=d$ if $\Sigma$ is defined to be a finite set of points.  This can be derived by inspection from our proof, but is in fact a result of Kovrijkine~\cite{kovrijkine01} which holds for all relatively dense (or $d$-GCC) sets $E$ and not only for neighborhoods $\U_{\delta}(E)$ of such sets. We wonder if this is the case for other co-dimension $k$, and we specifically pose the following question.

\begin{ques}\label{annques} Suppose that $E$ satisfies the $1$-GCC.  Does there exist $C>0$ such that for every $R>0$,
$$\|f\|_{L^2(\R^d)}\leq C\|f\|_{L^2(E)}
$$
for every $f\in L^2(\R^d)$ with $\spec(f)\subset \U_1(R\mathbb{S}^{d-1})$?
\end{ques}

\subsection{Application to the decay of damped wave equations}

Our results above were inspired by the following energy decay rate problem for damped fractional wave equation. 

Fix $s>0$ and a damping function $\gamma : \R^d \to \R_{\ge 0}$. Consider the fractional damped wave equation recently introduced by Malhi and Stanislavova in \cite{malhi19}.\\

For $(x,t) \in \R^d \times \R_{\ge 0}$, let $w$ satisfy
 	\begin{equation}\label{eq:1} w_{tt}(x,t) +\gamma(x)w_t(x,t) + \ds w(x,t)=0.\end{equation}
The damping force is represented by $\gamma w_t$ and the fractional Laplacian is defined, for $r \in \R$ by
	\[ (-\Delta+1)^{r}f(x) = \int_{\R^d} (|\xi|^2+1)^r \wh f(\xi) e^{ix \xi} \, d\xi. \] 
Herein, we study the decay rate of the energy of $w$, defined by
	\begin{align*} E(t) &= \|(w(t),w_t(t))\|_{\hls} \\
			&= \left(\int_{\R^d} |\dss w(x,t)|^2 + |w_t(x,t)|^2 \, dx \right)^{1/2}.\end{align*}
Standard analysis shows that if $\gamma=0$, then the energy is conserved, i.e. there is no decay. On the other hand, for constant damping $\gamma=c >0$, it can be shown that $E(t)$ decays exponentially.\\

The classical case of $s=2$ has been well-studied on bounded domains in the pioneering works of Bardos, Lebeau, Rauch, Taylor, and Phillips \cite{b-l-r,rauch74}. Recently, Burq and Joly have extended these results to $\R^d$ \cite{burq16}, and in particular showed that if $\gamma$ is uniformly continuous and satisfies the GCC condition (\ref{eq:gamma-gcc}) below, then $E(t)$ decays exponentially in $t$.  The methods in these works are that of microlocal and semiclassical analysis for which we refer to the book of Zworski \cite{zworski-book}. These techniques, which allow one to handle very general Laplacians, impose regularity constraints on the damping coefficient $\gamma$. \\

We note two recent works which have, in one dimension, utilized Fourier analysis to prove exponential decay for rough damping \cite{green20decay,malhi18}.  Fourier analytic methods have also proved useful in understanding (polynomial, or logarithmic) decay rates of the semi-group under weaker conditions than the GCC \cite{anantharaman2014sharp, wunsch17}.\\


Using Theorem~\ref{sphere}, we will prove a resolvent estimate (Proposition~\ref{prop:res} below) for the fractional Laplacian, which then, using the strategy in~\cite{green20decay}, yields a new proof of the Burq-Joly theorem~\cite{burq16} \textcolor{black}{as a special case, and extends the results of \cite{malhi19} to higher dimensions and to non-periodic damping.}

\begin{thm}\label{thm:wave}
Suppose $\gamma$ is a non-negative, bounded, uniformly continuous function. There exists $L>0$ and $c>0$ such that
	\begin{equation}\label{eq:gamma-gcc} \int_\ell \gamma(x) dm_1(x) \ge c >0 \end{equation}
for all line segments $\ell \subset \R^d$ of length $L$ if and only if for every $s>0$ there exists $C,\omega>0$ such that
	\[ E(t) \le \left\{ \begin{array}{lrl} C(1+t)^{\tfrac{-s}{4-2s}}\|w(0),w_t(0)\|_{H^s \times H^{s/2}} & \mbox{if}& 0 < s< 2\\[2mm] Ce^{-\omega t}E(0) & \mbox{if}& s \ge 2 \end{array} \right. \]
for all $t>0$.	
\end{thm}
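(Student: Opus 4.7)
The plan is to rewrite~\eqref{eq:1} as the first-order system $U_t = \cala U$ on the energy space $\hls$, where
\[ \cala = \begin{pmatrix} 0 & I \\ -\ds & -\gamma \end{pmatrix}, \qquad U = (w, w_t)^T. \]
Since $\gamma \ge 0$, $\cala$ generates a contraction semigroup whose norm is exactly the energy $E(t)$. The sufficiency direction of the theorem then reduces, by standard operator-theoretic arguments (Gearhart--Pr\"uss for uniform resolvent bounds, Borichev--Tomilin for polynomial ones), to a resolvent estimate of the form $\|(\cala - i\tau)^{-1}\|_{\hls \to \hls} \lesssim |\tau|^{\alpha}$ with $\alpha = 0$ for $s \ge 2$ and $\alpha = (4-2s)/s$ for $0<s<2$; the latter converts to precisely the polynomial decay rate $(1+t)^{-s/(4-2s)}$ on the domain of $\cala$, which sits inside $H^s \times H^{s/2}$. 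Establishing this resolvent bound is the content of Proposition~\ref{prop:res}.

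To prove the resolvent estimate I would solve $(\cala - i\tau)(u,v) = (f,g)$, substitute $v = f + i\tau u$, and reduce to the Helmholtz-type scalar equation
\[ \bigl(\ds - \tau^2 + i\tau \gamma\bigr) u = h, \qquad h = -g - (\gamma + i\tau)f. \]
Pairing with $u$ in $L^2$ and taking imaginary parts yields the damping bound
\[ |\tau|\int_{\R^d}\gamma(x)|u(x)|^2\, dx \lesssim \|h\|_{L^2}\|u\|_{L^2}, \]
which controls $u$ only where $\gamma$ is large. Uniform continuity of $\gamma$ together with~\eqref{eq:gamma-gcc} ensures that some sublevel set $\{\gamma \ge c_0\}$ contains the $\delta$-neighborhood of a $1$-GCC set. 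To promote the damping bound into a global $L^2$ estimate, I would split $u = u_{\text{near}} + u_{\text{far}}$ via a Fourier multiplier, taking $u_{\text{near}}$ to have spectrum in the shell $\{|(|\xi|^2+1)^{s/2} - \tau^2| \le \beta_\tau\}$. On the far region the symbol of $\ds - \tau^2$ is bounded below by $\beta_\tau$, so an elliptic estimate controls $u_{\text{far}}$ in terms of $h$. The near region is an annulus centered at $|\xi| = R_\tau \sim \tau^{2/s}$ of thickness $\sim \beta_\tau/\tau^{2-2/s}$ in $|\xi|$, so choosing $\beta_\tau$ to make this thickness a fixed constant allows Theorem~\ref{sphere} to upgrade the damping bound to $\|u_{\text{near}}\|_{L^2}$ with a constant independent of $\tau$.

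The main difficulty is calibrating $\beta_\tau$: the elliptic step requires $\beta_\tau$ large enough to absorb the non-Fourier-localized term $i\tau\gamma u$, while the near step caps $\beta_\tau$ so that Theorem~\ref{sphere} remains $\tau$-uniform. For $s \ge 2$ the choice $\beta_\tau \sim |\tau|$ meets both requirements and yields a uniform resolvent bound, giving exponential decay via Gearhart--Pr\"uss. For $0<s<2$ the two constraints conflict, and the optimal compromise incurs exactly the polynomial loss $|\tau|^{(4-2s)/s}$ in the resolvent norm, which Borichev--Tomilin converts into the stated polynomial decay. Handling the non-localized damping term requires a standard Littlewood--Paley-type iteration that I would expect to be routine.

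Finally, the necessity of~\eqref{eq:gamma-gcc} follows by contraposition: if~\eqref{eq:gamma-gcc} fails, then for each $L, c > 0$ one finds a line segment $\ell$ of length $L$ with $\int_\ell \gamma\, dm_1 < c$, and one constructs high-frequency Gaussian beam quasimode solutions of~\eqref{eq:1} concentrated along such $\ell$. Along each beam the effective damping is negligible, so the beam's energy decays arbitrarily slowly, contradicting either decay estimate in the theorem.
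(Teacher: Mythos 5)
Your high-level plan matches the paper's: semigroup reformulation, Gearhart--Pr\"uss/Borichev--Tomilov reduction to a resolvent estimate, then Fourier-localization to a shell where Theorem~\ref{sphere} applies. But there are two places where your execution diverges from the paper in ways that matter.

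First, in the sufficiency direction you propose to Fourier-decompose the \emph{damped} scalar equation $(\ds - \tau^2 + i\tau\gamma)u = h$ into near/far pieces. The term $i\tau\gamma u$ does not commute with the Fourier projector, so the ``elliptic estimate on the far region'' is contaminated by a commutator $[\Pi_{\text{far}}, \gamma]$ (or, equivalently, by the full term $\tau\|\gamma u\|$ which is not itself small). You flag this and assert a ``routine Littlewood--Paley-type iteration,'' but it is not clear this closes without losing extra powers of $\tau$, and the paper never faces this problem. Instead, the paper proves the resolvent estimate, Proposition~\ref{prop:res}, for the \emph{free} Fourier multiplier $\ds$ alone (so the near/far split commutes exactly with the operator), giving the estimate~(\ref{eq:wave-res}) for $\cala_0$ at the system level. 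The damping term is then absorbed afterwards by the dissipativity identity $\re\lip(\cala_\gamma-i\lambda)U,U\rip_\hls = -\lip\gamma u_2,u_2\rip$, which bounds $\|\gamma u_2\|^2$ with no commutators at all. That separation --- free-resolvent Fourier analysis first, damping via dissipativity second --- is what makes the argument close cleanly; you should reorganize your forward direction along these lines rather than trying to Fourier-localize the damped equation directly. Relatedly, the paper uses a \emph{fixed-width} shell $\wt A_\lambda = \{\xi : |(|\xi|^2+1)^{1/2} - \lambda^{1/s}| \le 1/2\}$ (so that Theorem~\ref{sphere} applies with one constant) and extracts the polynomial factor from the elementary symbol lower bound $|\tau^s - \lambda| \gtrsim (1+\lambda)^{1-1/s}$ off the shell, rather than tuning a variable width $\beta_\tau$.

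Second, for necessity you propose Gaussian-beam quasimodes along poorly damped rays. That is the classical semiclassical argument, but it sits awkwardly here: $\gamma$ is only assumed bounded and uniformly continuous, and the error analysis in a quasimode construction typically demands more regularity, which would weaken the statement. The paper's necessity proof is entirely different and deliberately elementary: starting from the Gearhart--Pr\"uss resolvent bound at $s=2$, it tests against $U = (\dssi u, iu)$, localizes $\wh u$ to a thin shell so the free term is negligible, uses modulation invariance to pass to arbitrary thin slabs, and then invokes the \emph{converse} direction of the PLS-for-strips theorem (Remark~\ref{nbhdrem}) to conclude that $\{\gamma>\eps\}$ satisfies the $1$-GCC, hence~(\ref{eq:gamma-gcc}) holds. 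This route sidesteps quasimode constructions entirely and is one of the main points of the paper; your contraposition-by-Gaussian-beam route is not wrong in spirit, but it is not what the paper does, and it would need a separate justification under the low regularity assumed on $\gamma$.
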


We reiterate that, in the case $s=2$, Theorem \ref{thm:wave} was proved by Burq and Joly \cite{burq16} using semiclassical analysis.  Our main goal here was to show how such results follow directly from uncertainty principles. This yields a particularly elementary proof. 
The compactness methods of \cite{burq16} enable one to prove Theorem \ref{sphere} for $\beta$ small enough depending on $E$ and $\delta$, but we do not know how to obtain the full strength of Theorem~\ref{sphere} by these methods.  Consequently, the uncertainty principles developed here may have other applications to control theory problems for wave equations.
Burq and Joly also pose the question of whether the result can be proved without the assumption of uniform continuity.  This would follow from a positive answer to Question~\ref{annques}.

\section{A Paneah-Logvinenko-Sereda theorem for strips}

The following result, which we believe is of interest by itself, is an important ingredient in the proof of our main theorems. It can be viewed as a PLS theorem for strips, in a sense that it characterizes the sets which (up to modification by sets of $m_d$-measure zero) satisfy the $k$-GCC, $k \in \{1,\ldots,d\}$, as those for which (\ref{eq1}) holds whenever $\spec(f)$ is contained in a $(d-k)$-dimensional ``strip.''  To precisely state the result, we need to introduce some additional notation.\\

A $(d-k)$-plane is a $(d-k)$-dimensional affine plane (which we interpret as a single point if $k=d$). For a set $S\subset \R^d$, we define\footnote{The notation comes from Peter Jones' analysts traveling salesman problem \cite{jones1990rectifiable}.} 
$$\beta_{d-k}(S) = \inf_{\substack{L\text{ is a }\\d-k\text{ plane}}}\sup_{x\in S}\dist(x,L)
$$
Therefore, if $\beta_{d-k}(S)< \beta$ then there is a $(d-k)$-plane $L_S$ such that $S\subset \U_{\beta}(L_S)$.

\begin{thm}\label{GCCcharac}
Fix $k\in \{1,\dots, d\}$, $\ell>0$ and $\gamma>0$, $p\in [1,\infty)$.  For every $\beta>0$ there exists $C>0$ such that if $E$ satisfies the $k$-dimensional $(\ell,\gamma)$-GCC, and $f\in L^p(\R^d)$ satisfies $\beta_{d-k}(\spec(f))<\beta$, then
    $$\|f\|_{L^p(\R^d)}\leq C\|f\|_{L^p(E)}.
    $$
\end{thm}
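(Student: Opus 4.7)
The plan is to slice along the $k$ physical-space directions orthogonal to the $(d-k)$-plane $L$ whose $\beta$-neighborhood contains $\spec(f)$, and reduce to the classical Paneah-Logvinenko-Sereda theorem in $\R^k$. The $k$-GCC is, by design, exactly the condition needed for the axis-aligned slices of $E$ to be uniformly thick in $\R^k$. As a preliminary reduction, note that both sides of the desired inequality, the $k$-GCC property of $E$, and the hypothesis $\beta_{d-k}(\spec(f))<\beta$ are invariant under simultaneous rigid motions applied to $f$ and $E$; moreover, modulation $f\mapsto e^{i\langle\xi_0,\cdot\rangle}f$ translates $\spec(f)$ by $\xi_0$ while leaving $|f|$ unchanged. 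Thus we may assume $L=\{0\}^k\times\R^{d-k}$, so that
$$\spec(f)\subset\{(\xi',\xi'')\in\R^k\times\R^{d-k}:|\xi'|<\beta\}.$$

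Writing $x=(x',x'')\in\R^k\times\R^{d-k}$, set $f_{x''}(x'):=f(x',x'')$ and $E_{x''}:=\{x'\in\R^k:(x',x'')\in E\}$. Two slicing facts drive the argument. First, every slice $E_{x''}$ is a thick subset of $\R^k$ with parameters $(\ell,\gamma)$: given a $k$-cube $Q'\subset\R^k$ of side at least $\ell$, the product $Q'\times\{x''\}$ is a $k$-cube of side at least $\ell$ in $\R^d$, so the $k$-GCC of $E$ gives
$$m_k(Q'\cap E_{x''})=\mathcal{H}^k\bigl((Q'\times\{x''\})\cap E\bigr)\geq\gamma\ell^k.$$
Second, for a.e.\ $x''$, the $\R^k$-Fourier transform of $f_{x''}$ is supported in the ball $\{|\xi'|<\beta\}$; this is a direct Fubini computation for Schwartz $f$, and the general $L^p$ case is handled by first pre-mollifying with a Schwartz bump whose Fourier transform has small support near the origin (enlarging the spectrum by an arbitrarily small amount) and then letting the mollification parameter tend to zero.

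Given these facts, Kovrijkine's quantitative PLS theorem \cite{kovrijkine01} on $\R^k$ yields a constant $C=C(\beta,\ell,\gamma,k,p)$, \emph{independent of $x''$}, with
$$\|f_{x''}\|_{L^p(\R^k)}^p\leq C^p\,\|f_{x''}\|_{L^p(E_{x''})}^p.$$
Integrating over $x''\in\R^{d-k}$ and applying Fubini on both sides gives $\|f\|_{L^p(\R^d)}^p\leq C^p\|f\|_{L^p(E)}^p$, which is the claim. The main technical point is the transfer of the spectral hypothesis to fibers when $p\neq 2$ (where $\widehat f$ is only a tempered distribution), handled by the mollification argument indicated above; once that is in place, the rest is just the one-variable PLS theorem followed by Fubini.
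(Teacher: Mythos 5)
Your proposal is correct and takes essentially the same route as the paper: after reducing by rigid motion and modulation to an axis-aligned strip $[-\beta,\beta]^k\times\R^{d-k}$, both arguments slice along the $k$ ``thick'' directions, verify that every slice $E_{x''}$ inherits $(\ell,\gamma)$-thickness in $\R^k$ from the $k$-GCC, apply a one-variable PLS inequality fiberwise with a constant uniform in $x''$, and conclude by Fubini. The only cosmetic difference is that the paper re-runs Kovrijkine's good-cube/Bernstein/Remez machinery on the slices rather than invoking his theorem as a black box, which is how it extracts the explicit constant $C=(C_0/\gamma)^{C_0\beta\ell}$, and it handles the passage of the spectral condition to fibers via the convolution identity $f=f*_k\psi$ rather than your mollification argument; the underlying idea is identical.
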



The constant $C$ in Theorem \ref{GCCcharac} will take the form
$$C= \Bigl(\frac{C_0 }{\gamma}\Bigl)^{C_0\beta \ell},
$$
where $C_0=C_0(k)>0$.  The proof of Theorem \ref{GCCcharac} is a modification of the proof of the aforementioned PLS theorem given by Kovrijkine ~\cite{kovrijkine01}, where it is shown (in the case $k=d$) that the form of constant we obtain is sharp (up to the value of $C_0$).\\

Provided that one handles sets of measure zero appropriately, the $k$-GCC condition is necessary for the conclusion to hold, see Proposition \ref{PLSprop} below.\\


Without loss of generality, we may assume that a best approximating plane for $\beta_{d-k}(\spec(f))$ is the plane $\R^{d-k}$.   Theorem \ref{GCCcharac} therefore follows from the following more precise proposition. In this statement, and throughout the paper, for $k\in \N$, $m_k$ denotes the $k$-dimensional Lebesgue measure on $\R^k$.

\begin{prop}\label{PLSprop}  Fix $E\subset \R^d$.  The following two conditions are equivalent:

\begin{enumerate}
\item There exist $\ell>0$ and $\gamma\in (0,1)$ such that for $m_{d-k}$-almost every $x'\in \R^{d-k}$, whenever $Q\subset \R^k$ is an cube of length at least $\ell$, then
\begin{equation}\label{coordinatecubes}m_k(\{t\in Q: (t,x')\in E\})\geq \gamma\ell^k.\end{equation}
\item For every $\beta>0$, there exists $C>0$ such that if $f\in L^p(\R^d)\cap L^2(\R^d)$ satisfies $\supp(\wh{f}\,)\subset [-\beta,\beta]^k\times \R^{d-k}$, then
$$\int_{\R^d}|f|^pdm_d\leq C\int_E |f|^pdm_d.
$$
\end{enumerate}
Moreover, in the direction (1)$\implies$(2) we will prove that the constant $C$ takes the form
$$C=\Bigl(\frac{C_2}{\gamma}\Bigl)^{C_2\beta\ell}
$$ 
for a constant $C_2$ depending on $k$.
\end{prop}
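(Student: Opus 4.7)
The plan is to reduce both implications to Kovrijkine's classical PLS theorem applied on $k$-dimensional slices. The key observation is that if $\supp(\wh{f}\,) \subset [-\beta,\beta]^k \times \R^{d-k}$, then for each $x' \in \R^{d-k}$ the function $t \mapsto f(t,x')$ on $\R^k$ has Fourier support contained in $[-\beta,\beta]^k$. Indeed, a partial inverse Fourier transform in the last $d-k$ variables of $\wh{f}(\xi,\eta)$ yields the Fourier transform of this slice in the $\xi$-variable, and the $\xi$-projection of $\supp(\wh{f}\,)$ lies in $[-\beta,\beta]^k$.

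For the direction (1)$\Rightarrow$(2), by hypothesis the slice $E_{x'} := \{t \in \R^k : (t,x') \in E\}$ is an $(\ell,\gamma)$-thick subset of $\R^k$ for $m_{d-k}$-a.e.\ $x'$. Applying Kovrijkine's PLS theorem on $\R^k$ to $g_{x'}(t) := f(t,x')$ gives
\begin{equation*}
\int_{\R^k} |g_{x'}(t)|^p \, dm_k(t) \leq \Bigl(\frac{C_2}{\gamma}\Bigr)^{C_2 \beta \ell} \int_{E_{x'}} |g_{x'}(t)|^p \, dm_k(t),
\end{equation*}
with $C_2 = C_2(k)$. Integrating in $x' \in \R^{d-k}$ and invoking Fubini's theorem on both sides delivers (2) with precisely the stated constant.

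For the converse (2)$\Rightarrow$(1), I would argue by contrapositive. If (1) fails, then for every $\ell, \gamma > 0$ there is a set $A_{\ell,\gamma} \subset \R^{d-k}$ of positive $m_{d-k}$-measure on which $E_{x'}$ fails to be $(\ell,\gamma)$-thick in $\R^k$. Fixing any $\beta > 0$, the converse direction of the classical Kovrijkine PLS theorem produces, for some $x_0' \in A_{\ell,\gamma}$, a sequence $g_n \in L^p(\R^k)$ with $\supp(\wh{g_n}) \subset [-\beta,\beta]^k$ and $\|g_n\|_{L^p(\R^k)} / \|g_n\|_{L^p(E_{x_0'})} \to \infty$. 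Forming the tensor product $f_n(t,x') = g_n(t) \phi(x')$, where $\phi \in L^p(\R^{d-k})$ is a bump supported in a small neighborhood $V$ of $x_0'$ on which the slice norms $\|g_n\|_{L^p(E_{x'})}$ are comparable to $\|g_n\|_{L^p(E_{x_0'})}$, then contradicts (2).

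The main obstacle is in the converse direction, where one must carefully select $x_0' \in A_{\ell,\gamma}$ together with a neighborhood $V$ on which $x' \mapsto \|g_n\|_{L^p(E_{x'})}$ remains controlled -- this is essentially a measurable-selection or Lusin-type argument, and is the reason the hypothesis of (1) is phrased to allow an exceptional null set in $\R^{d-k}$. By contrast, the forward direction (1)$\Rightarrow$(2) is formal once the slicing observation is made, and the explicit form $(C_2/\gamma)^{C_2\beta\ell}$ of the constant is inherited directly from Kovrijkine's bound on $\R^k$.
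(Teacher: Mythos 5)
Your (1)$\implies$(2) direction matches the paper's: both slice over $x'\in\R^{d-k}$, apply the $k$-dimensional PLS inequality to the slice $t\mapsto f(t,x')$, and integrate in $x'$. (The paper re-runs the Bernstein/good--bad-cube/Remez argument on each slice rather than citing Kovrijkine wholesale, mainly to track the precise constant, but this is a cosmetic difference.)

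The (2)$\implies$(1) direction has two real gaps. First, negating (1) only tells you that for each \emph{fixed} pair $(\ell,\gamma)$ the bad set $A_{\ell,\gamma}$ has positive measure; it does \emph{not} produce a single $x_0'$ for which the slice $E_{x_0'}$ fails to be thick for all parameters, so the qualitative converse of Kovrijkine's theorem is not applicable to $E_{x_0'}$. The sets $A_{\ell,\gamma}$ decrease as $\ell$ grows and $\gamma$ shrinks, and their intersection can perfectly well have measure zero. You would instead have to work with explicit test functions witnessing the failure of $(\ell,\gamma)$-thickness and show quantitatively that the constant in (2) must blow up as $\ell\to\infty$, $\gamma\to 0$; your sketch does not do this. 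Second, the tensor-product step requires $\|g_n\|_{L^p(E_{x'})}\approx\|g_n\|_{L^p(E_{x_0'})}$ on a \emph{neighborhood} $V$ of $x_0'$ in $\R^{d-k}$, but $E$ is an arbitrary measurable set, so $x'\mapsto E_{x'}$ has no continuity whatsoever; a Lusin-type argument controls a measurable function on a set of nearly full measure, not on a neighborhood, and you would need the control uniformly over the whole sequence $g_n$, so the exceptional sets accumulate. The paper sidesteps both problems by a direct argument (attributed to Paneah): test (2) with $f(t,x')=g(t)\,h(x')^{1/p}$ for arbitrary $h\geq 0$ with $\|h\|_1=1$, use Tonelli to see that $\int_{E_{x'}}|g|^p\,dm_k\geq 1/C$ for $m_{d-k}$-a.e.\ $x'$, and use separability of $L^p(\R^k)$ to make the exceptional null set uniform over a countable dense family of normalized band-limited $g$. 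The $(\ell,\gamma)$-thickness of a.e.\ slice $E_{x'}$ then drops out by plugging one fixed $g$ and its translates into the resulting slice-wise PLS inequality. No selection, continuity, or neighborhood argument is required.
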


\begin{proof}  We first shall prove \textbf{(2)$\implies$(1)}.  The proof in this direction follows an idea of Paneah \cite{paneah61} as presented Havin and Joricke's book \cite{havin12}.  Fix $p\in (1,\infty)$.  By a simple covering argument, if the inequality (\ref{coordinatecubes}) for all cubes of length $\ell$, then it holds (with $\gamma$ replaces by $c\gamma$ for an absolute constant $c$) for every $\ell'\geq \ell$.  Therefore if suffices to find some $\ell>0$ such that (\ref{coordinatecubes}) for cubes of length $\ell$.

Suppose that $f(t,x')=g(t)[h(x')]^{1/p}$, where $g\in L^p(\R^{k})$ satisfies $\spec(g)\in [-\beta, \beta]^k$, and $h\in L^1(\R^{d-k})$, $h\geq 0$ and $\|h\|_1=1$.  Then, by Tonelli's theorem,
$$1\leq C\int_{\R^{d-k}}h(x')\int_{\{t\in \R^k: (t,x')\in E\}}|g(t)|^p dm_k(t)dm_{d-k}(x').
$$
Insofar as the space $L^p(\R^k)$ is separable, we therefore find that, for $m_{d-k}$-almost every $x'\in \R^{d-k}$,
$$C\int_{\{t\in \R^k: (t,x')\in E\}}|g(t)|^p dm_k(t)\geq 1\text{ for every }g\in \mathcal{F},
$$
 where $\mathcal{F} = \{g\in L^p(\R^k): \|g\|_p=1\text{ and }\spec(g)\in [-\beta, \beta]^k\}$.

Fix any $g\in \mathcal{F}$ (it is clearly a non-empty set). Then there exists $M>0$ such that
$$\int_{\{t\in \R^k:|g(t)|\geq M\}}|g(t)|^pdm_k(t)\leq \frac{1}{4C},$$
along with $\ell>0$ such that $$\int_{\R^k\backslash [-\ell, \ell]^k}|g(t)|^pdm_k(t)\leq \frac{1}{4C}.
$$
Taking into account the fact that for every $z'\in \R^k$, $g(\cdot-z')\in \mathcal{F}$, we therefore get that, for $m_{d-k}$-almost every $x'\in \R^{d-k}$,
$$M m_k(\{t\in z'+[-\ell, \ell]^k: (t,x')\in E\})\geq \frac{1}{2C}\text{ for every }z'\in \R^k.
$$
Therefore (1) holds.

\textbf{(1)$\implies$(2).}  We may assume that $\ell=1$ by replacing\footnote{More precisely, if $E$ satisfies (\ref{coordinatecubes}) for every cube of length atleast $\ell$, then $E_{\ell}=\frac{1}{\ell}E$ satisfies (\ref{coordinatecubes}) for every cube of length $1$.  But now if $f$ satisfies $\supp(\wh{f}\,)\subset [-\beta,\beta]^k\times \R^{d-k}$, then $f_{\ell}:=f(\ell\cdot\,)$ satisfies $\wh{f_{\ell}} = \frac{1}{\ell^d}f(\tfrac{\cdot}{\ell})$ and hence $\supp(\wh{f_{\ell}}\,)\subset [-\beta \ell,\beta \ell]^k\times \R^{d-k}$, but also $\int_{E_{\ell}}|f_{\ell}|^p dm_d = \ell^{-d}\int_E|f|^p dm_d$, $\int_{\R^d}|f_{\ell}|^p dm_d = \ell^{-d}\int_{\R^d}|f|^p dm_d$} $\beta$ by $\beta\ell$.   By modifying the set $E$ by a set of Lebesgue measure zero in $\R^d$ (which does not change the integral in (2)), we may assume that the condition in (1) holds for every $x'\in \R^{d-k}$.

 Suppose $\|f\|_{L^p(\R^d)}=1$.  We also may assume without loss of generality that $f\in \mathcal{S}(\R^d)$.


Choose $\psi_0\in \mathcal{S}(\R^k)$ with $\wh{\psi}\equiv 1$ on $[-1,1]^k$ and $\spec(\psi_0)\subset [-2,2]^k$.  Put $\psi = \psi_0(\cdot/\beta)$.  We first claim that we have $f = f*_k\psi$, where $*_k$ denotes a convolution in the first $k$ variables.  To see this, for $\xi\in \R^k$ and $\eta\in \R^{d-k}$, write
\begin{equation}\begin{split}\nonumber\wh{f}(\xi,\eta) & = \wh{f}(\xi, \eta)\wh{\psi}(\xi) \\& = \int_{\R^{d-k}}\int_{\R^k}\int_{\R^k} f(t,x')\psi(s)e^{-2\pi i \xi\cdot (t+s)}dm_k(t)dm_k(s)e^{-2\pi i x'\cdot \eta}dm_{d-k}(x')\\
&=\int_{\R^{d-k}}\int_{\R^k}\int_{\R^k} f(t)\psi(\tau-t)dm_k(t)e^{-2\pi i \xi\cdot \tau}dm_k(\tau)e^{-2\pi i x'\cdot \eta}dm_{d-k}(x')\\
& = \wh{f*_k\psi}(\xi, \eta).
\end{split}
\end{equation}

Fix $x'\in \R^{d-k}$, and set $F=f(\cdot, x')$ so $F:\R^k \to \R$.
Then for any multi-index $\alpha\in \Z_+^k$, we have
$F = \psi*_k\psi*_k\cdots\psi*_kF$, where there are $|\alpha|:=\sum_{j=1}^k \alpha_j$ convolutions of $\psi$.  Then, with $C_0=\|\nabla \psi_0\|_{L^1(\R^k)}$, we have
\begin{equation}\begin{split}\nonumber\|D^{\alpha}F\|_{L^p(\R^k)} &\leq \beta^{|\alpha|}\|\nabla\psi_0\|_{L^1(\R^k)}^{|\alpha|}\|F\|_{L^p(\R^k)}= (\beta C_0)^{|\alpha|}\|F\|_{L^p(\R^k)}.
\end{split}\end{equation}


Fix $A>C_0$ to be chosen momentarily.  We split $\R^k$ into cubes of length $1$.  We call such a cube $I$ \emph{good} if
$$\int_{I}|D^{\alpha}F|^pdm_k\leq (\beta A)^{|\alpha|p}\int_I |F|^p dm_k\text{ for every }\alpha\in \Z_+^k.
$$
Observe that, insofar as there are at most $(n+1)^k$ $\alpha\in \Z_+^k$ with $|\alpha|=n$
\begin{equation}\begin{split}\nonumber\sum_{I\text{ not good}}\int_I |F|^pdm_k&\leq \sum_{n=1}^{\infty}\sum_{\alpha\in \Z^k_+:|\alpha|=n}\frac{1}{(\beta A)^{|\alpha|p}}\int_{\R^k}|D^{\alpha}F|^pdm_k\\&\leq \sum_{n=1}^{\infty}(n+1)^k\frac{C_0^{np}}{A^{np}}\int_{\R^k}|F|^pdm_k.
\end{split}\end{equation}
Therefore, if $A$ is large enough in terms of $C_0$ (which fixes $A$ in terms of $k$), then
$$\int_{\R^k}|F|^p dm_k\leq 2\sum_{I \text{ good}}\int_I |F|^p dm_k.
$$

Now fix a good cube $I$, and put $Z = \{t\in I: (t,x')\in E\}$.  By assumption, $m_k(Z)\geq \gamma$.  Since $I$ is good,
$$\int_{I}|D^{\alpha}F|^pdm_k\leq (\beta A)^{p|\alpha|}\int_I |F|^p dm_k\text{ for every }\alpha\in \Z_+^k,
$$
and so by appealing to a standard Remez inequality for analytic functions (as in  \cite{kovrijkine01}, see also \cite[Corollary 2.8]{jaye18}, \cite[Proposition 3.7]{lebeau19}),
$$\int_I |F|^p dm_k\leq C(\gamma, \beta A)\int_Z |F|^p dm_k,
$$
where $C(\gamma, \beta A) = (C_1/\gamma)^{C_1 \beta A}$ for $C_1=C_1(k)$.

Summing over good intervals, we infer that for every $x'\in \R^{d-k}$,
$$\int_{\R^k}|f(t,x')|^pdm_k(t)\leq C(\gamma, \beta A)\int_{\{t\in \R^k: (t,x')\in E\}}|f(t,x')|^p dm_k(t).
$$
Finally, integrating over $x'\in \R^{d-k}$ yields that
$$\int_{\R^d}|f|^p dm_d\leq C(\gamma,\beta A)\int_{\R^{d-k}}\int_{\{t\in \R^k:(t,x')\in E\}}|f(t,x')|^pdm_k(t)dm_{d-k}(x').
$$
The right hand side is bounded by $C(\gamma,\beta A)\int_{E}|f|^pdm_d$.  Setting $C_2=AC_1$ shows that we have found the desired form of constant.
\end{proof}

\begin{rem}\label{nbhdrem} It easily follows from the proof of the direction (2)$\implies$(1) of Proposition \ref{PLSprop} that if $E\subset \R^d$ is such that \emph{there exists} $\beta>0$, $C>0$ such that $$\|f\|_{L^p(\R^d)}\leq C\|f\|_{L^p(E)}
\text{ whenever }\beta_{d-k}(\spec(f))\leq \beta,$$
then \emph{any open neighborhood of $E$ satisfies the $k$-GCC}.
 \end{rem}









\section{Proof of the main results}

We will use Theorem~\ref{GCCcharac} from the previous section to derive quantitative uniqueness properties for functions with spectrum contained in more complicated sets. Our main results (Theorems~\ref{sphere} and~\ref{manifold}) are a simple consequence of the following more general result. 

\begin{thm}\label{kdimprop}  
Fix $k\in \{1,\dots, d\}$, $\beta>0$, $\gamma\in (0,1)$, $\ell>0$, and $\delta\in (0,\ell)$.  There exists $R>0$ and $C>0$ such that if
 \begin{enumerate}
 \item $\Gamma\subset \R^d$ satisfies that for any ball $B$ of radius $R$ centred on $\Gamma$, $$\beta_{d-k}(B\cap \Gamma)<\beta,\text{ and}$$
 \item $E$ satisfies the $k$-dimensional $(\ell, \gamma)$-GCC,
  \end{enumerate}
 then for any $f\in L^2(\R^d)$ with $\spec(f)\subset \U_{\beta}(\Gamma)$,
$$\|f\|_{L^2(\R^d)}\leq C\|f\|_{L^2(\U_{\delta}(E))}.
$$
\end{thm}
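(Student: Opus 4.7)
My plan is to deduce Theorem~\ref{kdimprop} from the strip PLS estimate of Theorem~\ref{GCCcharac} via a smooth Fourier-side decomposition at scale $R$, where $R$ will be chosen large. Concretely, I would pick a maximal $R/4$-separated set $\{\xi_j\} \subset \Gamma$ and, subordinate to the bounded-overlap cover $\{B(\xi_j, R/2)\}$, construct non-negative smooth $\eta_j$ with $\sum_j \eta_j^2 = 1$ on $\U_\beta(\Gamma)$ (requiring $\beta < R/4$), each $\eta_j$ a rescaled bump satisfying $|D^\alpha \eta_j|\lesssim_\alpha R^{-|\alpha|}$. Setting $f_j = \check\eta_j * f$, the local flatness hypothesis forces $B(\xi_j,R)\cap \Gamma \subset \U_\beta(L_j)$ for some $(d-k)$-plane $L_j$, hence $\spec(f_j)\subset \U_{2\beta}(L_j)$ and $\beta_{d-k}(\spec f_j) \le 2\beta$. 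Theorem~\ref{GCCcharac} then yields $\|f_j\|_{L^2(\R^d)}\le C_0 \|f_j\|_{L^2(E)}$ with $C_0 = C_0(\beta,\ell,\gamma,k)$ independent of $R$ and $j$, and Plancherel gives $\|f\|_{L^2}^2 = \sum_j \|f_j\|_{L^2}^2 \le C_0^2 \sum_j \|f_j\|_{L^2(E)}^2$.

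To bound $\sum_j \|f_j\|_{L^2(E)}^2$ in terms of $\|f\|_{L^2(\U_\delta(E))}^2$, I split $f = h + r$ with $h = f\cdot 1_{\U_\delta(E)}$ and $r = f - h$. The $h$-part is immediate from Plancherel and $\sum_j \eta_j^2 \le 1$:
\[
\sum_j \int_E |\check\eta_j * h|^2 \le \sum_j \|\check\eta_j * h\|_{L^2(\R^d)}^2 = \int \Bigl(\sum_j \eta_j^2\Bigr)|\wh h|^2 \le \|h\|_2^2 = \|f\|_{L^2(\U_\delta(E))}^2.
\]
For the $r$-part, the key point is that $r\equiv 0$ on $\U_\delta(E)$, so for $x\in E$ the convolution $\check\eta_j*r(x)$ integrates $r$ only against the tail $\check\eta_j\cdot 1_{|z|>\delta}$; integration by parts yields $|\check\eta_j(z)|\lesssim R^d(1+R|z|)^{-N}$ for any $N$, making this tail tiny in $L^1$ once $R\delta$ is large.

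The chief technical obstacle is controlling the $r$-sum over $j$: a crude per-$j$ bound $\|\check\eta_j\cdot 1_{|z|>\delta}\|_{L^1}^2\|f\|_2^2$ is not summable when $\Gamma$ is unbounded. I plan to circumvent this by recasting $r\mapsto (\check\eta_j * r)_j$ as a single vector-valued convolution with $\ell^2$-valued kernel and exploiting the almost-orthogonality $\sum_j \eta_j^2 \le 1$ (so that the natural Fourier multiplier has norm at most $1$), combined with the rapid decay of the truncated kernel to extract a small factor $\eps(R)\to 0$ as $R\to\infty$. This would give $\sum_j \int_E |\check\eta_j*r|^2 \le \eps(R)\|f\|_2^2$, and plugging back yields $\|f\|_2^2 \le 2C_0^2 \|f\|_{L^2(\U_\delta(E))}^2 + 2C_0^2 \eps(R)\|f\|_2^2$; choosing $R$ so that $2C_0^2 \eps(R) \le 1/2$ finishes the proof.
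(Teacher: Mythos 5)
Your proposal is correct in outline and is genuinely different from the paper's argument. The paper does not use a Fourier-side partition of unity at scale $R$. Instead it tiles Fourier space by unit-scale cubes centered on a $1/2$-separated net $\{\xi_\ell\}\subset\Gamma$ and forms $g_\ell = f * \mathcal{F}^{-1}\wh{\varphi}(\cdot-\xi_\ell)$ with a fixed window $\varphi$ supported in the physical ball $B(0,\delta)$. The physical compact support of $\varphi$ is the paper's key device: it gives the exact identity $g_\ell = (f\chi_{\U_\delta(E)})*\mathcal{F}^{-1}\wh{\varphi}(\cdot-\xi_\ell)$ on $E$, so the transition from $E$ to $\U_\delta(E)$ costs nothing, entirely avoiding your $f=h+r$ split. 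The price is that $\wh{\varphi}$ is not compactly supported, so the authors split $g_\ell = g_\ell^{(1)}+g_\ell^{(2)}$ at Fourier radius $R$ and must show the heavily overlapping tails $g_\ell^{(2)}$ sum to something $O(1/R)\|f\|^2$. Your approach trades the other way: $\eta_j$ is compactly supported in Fourier at scale $R$, so the strip theorem applies exactly to each $f_j$ with no $g^{(2)}$-error and Plancherel gives true orthogonality; the error migrates to physical space via the tail of $\check\eta_j$ on $|z|>\delta$.

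One caution on the piece you flag as the technical obstacle: to sum $\sum_j\|(\check\eta_j 1_{|\cdot|>\delta})*r\|_2^2$ over an infinite family, the almost-orthogonality $\sum\eta_j^2\le 1$ by itself does not give a small constant — you need Plancherel plus a pointwise bound
\[
\sup_{\xi}\ \sum_j \bigl|\wh{\check\eta_j 1_{|\cdot|>\delta}}(\xi)\bigr|^2 \le \eps(R).
\]
Writing $\wh{\check\eta_j 1_{|\cdot|>\delta}}(\xi)=G\bigl((\xi_j-\xi)/R\bigr)$ with $G(\zeta)=\int_{|w|>R\delta}\check\eta_0(w)e^{2\pi i w\cdot\zeta}\,dw$, the sup bound $|G|\lesssim_N(R\delta)^{-N}$ is not enough: because the $\zeta_j=(\xi_j-\xi)/R$ are only $1/4$-separated in $\R^d$, you also need decay of $G(\zeta)$ in $|\zeta|$ faster than $|\zeta|^{-d/2}$ to sum the $\ell^2$-series. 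This follows by integration by parts in the truncated integral (the boundary terms on $|w|=R\delta$ contribute $(R\delta)^{d-1-N}$), but it is not a consequence of $\sum\eta_j^2\le 1$ alone and should be spelled out. With that filled in, your argument closes and yields the same conclusion as the paper's.
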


In contrast with Theorem~\ref{GCCcharac}, observe that in the conclusion of Theorem~\ref{kdimprop}, we only control the $L^2$-norm of $f$ by its norm on a $\delta$-neighborhood of a $k$-GCC set.  In the generality that Theorem~\ref{kdimprop} is stated, one cannot expect $R$ and $C$ to be bounded independently of $\delta$.  This can be seen as a direct consequence of the sharpness of the classical Ingham inequality for non-harmonic trigonometric series \cite{ingham36, jaffard01}.  The assumption that $\delta\in (0,\ell)$ in the statement of Theorem \ref{kdimprop} is not restrictive since $\U_{\delta}(E)\subset \U_{\delta'}(E)$ for $\delta'>\delta$.

Inspecting the proof, one can see that Theorem~\ref{kdimprop} holds with $C$ and $R$ both taking the form
\begin{equation}\label{constantform}C_1\Bigl(\frac{\ell}{\delta}\Bigl)^{d+1} \Bigl(\frac{C_0}{\gamma}\Bigl)^{C_0\ell \beta}
\end{equation}
where $C_0=C_0(k)$ and $C_1=C_1(k,d)$.

\begin{proof}[Proof of Theorem \ref{kdimprop}]  
Fix $k\in \{1,\dots, d\}$, and $\beta\geq 1$. As in the proof of Proposition \ref{PLSprop}, we may suppose that $\ell=1$ (this means replacing $\delta$ by $\delta/\ell\in (0,1)$).

Choose $\xi_{\ell}$ to be a maximal $1/2$-separated subset of $\Gamma$.  Then we may cover $\U_{\beta}(\Gamma)$ by cubes $Q_{\ell} = Q(\xi_{\ell},2)$ of sidelength $2$ centered on $\xi_{\ell}\in \Gamma$. 

Fix $\psi\in C^{\infty}_0(B(0,1))$ with $\int_{\R^d}\psi dm_d=1$ and $|\wh{\psi}(\xi)|\gtrsim 1$ on $Q(0,2)$.  

Set $\varphi = \delta^{-d}\psi\bigl(\delta^{-1}\cdot)\in C^{\infty}_0(B(0, \delta))$ so that $\wh{\varphi} = \wh{\psi}(\delta\,\cdot\,)$.  Then, since $\delta\in (0,1)$, $\int_{\R^d}\varphi dm_d=1$,  $|\wh{\varphi}(\xi)|\gtrsim 1$ on $Q(0,2)$, and for any $m\in \N$,
$$|\wh\varphi(\xi)|\lesssim_{m} \frac{1}{\textcolor{black}{\delta^{m}}(1+|\xi|)^{m}}.
$$

Fix $\ell$ fixed, consider the function
$$g_{\ell}= f*\mathcal{F}^{-1}\wh{\varphi}(\cdot-\xi_{\ell}),
$$
so that $\|\wh{f}\|_{L^2(Q_{\ell})}\lesssim \|\wh{g_{\ell}}\|_{L^2(\R^d)}$.

Now split $g_{\ell}=g_{\ell}^{(1)}+g_{\ell}^{(2)}$ where $g_{\ell}^{(1)} = \mathcal{F}^{-1}[\wh{g_{\ell}}\chi_{\{\xi:|\xi-\xi_{\ell}|\leq R\}}]$.  Observe that, for fixed $m\in \mathbb{N}$,
\begin{equation}\label{g2small}\begin{split}
\|g_{\ell}^{(2)}\|_{L^2(\R^d)}^2 &= \int_{|\xi-\xi_{\ell}|> R}|\wh{f}(\xi)|^2|\wh{\varphi}(\xi-\xi_{\ell})|^2dm_d(\xi) \\&\lesssim_m \textcolor{black}{\delta^{-m}} \int_{|\xi-\xi_{\ell}|>R} |\wh{f}(\xi)|^2\frac{1}{|\xi-\xi_{\ell}|^{m}}dm_d(\xi).
\end{split}\end{equation}


By assumption $\supp(\wh{g_{\ell}^{(1)}}\,)$ is contained in the $\beta$-neighborhood of a $(d-k)$-plane, and so Theorem \ref{GCCcharac}, with $A=\bigl(\frac{C}{\gamma}\bigl)^{C\beta}$,
$$\|g_{\ell}^{(1)}\|_{L^2(\R^d)}\lesssim A \|g_{\ell}^{(1)}\|_{L^2(E)}.
$$
Whence, applying (\ref{g2small}) twice, we infer that for every $m\in \mathbb{N}$,
\begin{equation}\begin{split}\nonumber\|\wh{f}\|^2_{L^2(Q_{\ell})}&\lesssim \|g_{\ell}\|_{L^2(\R^d)}^2\\&\lesssim_m A\|g_{\ell}^{(1)}\|_{L^2(E)}^2 +\textcolor{black}{\delta^{-m}}\int_{|\xi-\xi_{\ell}|>R} |\wh{f}(\xi)|^2\frac{1}{|\xi-\xi_{\ell}|^{m}}dm_d(\xi)\\
&\lesssim_m A \|g_{\ell}\|_{L^2(E)}^2+A\textcolor{black}{\delta^{-m}}\int_{|\xi-\xi_{\ell}|>R} |\wh{f}(\xi)|^2\frac{1}{|\xi-\xi_{\ell}|^{m}}dm_d(\xi).
\end{split}\end{equation}
We next would like to sum this inequality over $\ell$, using the fact that $\U_{\beta}(\Gamma)\subset \bigcup_{\ell}Q_{\ell}$.

Due to the support property of $\varphi$, we have that $g_{\ell} = (f\chi_{\U_{\delta}(E)})*\mathcal{F}^{-1}\wh{\varphi}(\cdot-\xi_{\ell})$ on $E$.  Whence,
\begin{equation}\begin{split}\nonumber \|g_{\ell}\|_{L^2(E)}^2 &\leq \int_{\R^d} |\wh{f\chi_{\U_{\delta}(E)}}(\xi)|^2|\wh{\varphi}(\xi-\xi_{\ell})|^2dm_d(\xi) \\&\lesssim \delta^{-d-1}\int_{\R^d} \frac{|\wh{f\chi_{\U_{\delta}(E)}}(\xi)|^2}{1+|\xi-\xi_{\ell}|^{d+1}}dm_d(\xi),
\end{split}\end{equation}
but,
\begin{equation}\nonumber\sum_{\ell} \int_{\R^d} \frac{|\wh{f\chi_{\U_{\delta}(E)}}(\xi)|^2}{1+|\xi-\xi_{\ell}|^{d+1}}dm_d(\xi) \lesssim \int_{\R^d} |\wh{f\chi_{\U_{\delta}(E)}}|^2dm_d,
\end{equation}
and so
\begin{equation}\label{mainterm}
\sum_{\ell}\|g_{\ell}\|^2_{L^2(E)}\lesssim \delta^{-d-1}\|f\|_{L^2(\U_{\delta}(E))}^2.
\end{equation}
On the other hand, for any $\xi\in \R^d$, and for any $k\geq 1$, there can be at most $C 2^{kd}R^d$ of the $1/2$-separated points $\xi_{\ell}$ in an annulus $A_k(\xi):=B(\xi, 2^{k}R)\backslash B(\xi, 2^{k-1}R)$.  Therefore,
\begin{equation}\begin{split}\sum_{\ell}\frac{\chi_{\{\ell:|\xi_{\ell}-\xi|>R\}}(\ell)}{|\xi-\xi_{\ell}|^m}&\leq \sum_{k=1}^{\infty}\sum_{\ell\,:\, \xi_{\ell}\in A_k(\xi)}\frac{1}{|\xi-\xi_{\ell}|^m}\lesssim \sum_{k=1}^{\infty}\frac{2^{kd}R^d}{2^{(k-1)m}R^m}\\&\lesssim_m \frac{1}{R^{m-d}}\text{ provided that }m>d.
\end{split}\end{equation}
Consequently, if we set $m=d+1$, then
\begin{equation}\label{errorterm}\sum_{\ell} \int_{\{|\xi-\xi_{\ell}|>R\}} \frac{|\wh{f}(\xi)|^2}{|\xi-\xi_{\ell}|^{m}}dm_d(\xi)\lesssim \frac{1}{R}\|f\|_{L^2(\R^d)}^2.
\end{equation}
Combining (\ref{mainterm}) and (\ref{errorterm}) results in
$$\|f\|_{L^2(\R^d)}^2\lesssim \frac{A}{\delta^{d+1}}\|f\|_{L^2(\U_{\delta}(E))}^2+\frac{A}{\delta^{d+1}R}\|f\|_{L^2(\R^d)}^2.
$$
If $R$ is sufficiently large multiple of $A\delta^{-d-1}$, then the second term on the right hand side can be hidden in the left hand side, and we get
$$\|f\|_{L^2(\R^d)}^2\lesssim \frac{A}{\delta^{d+1}}\|f\|_{L^2(\U_{\delta}(E))}^2.$$
The theorem is proved.
\end{proof}

We are now ready to prove our main result.

\begin{proof}[Proof of Theorem~\ref{manifold}] 
Fix $\delta>0$.   Choose $S$ large enough to be able to apply Theorem \ref{kdimprop} with $R$ replaced by $S$.   Since $\Sigma$ is a compact $(d-k)$-dimensional $C^1$-manifold embedded in $\R^d$, we can find a function $\sigma:\R_+\to \R_+$ with $\sigma(r)/r\to 0$ as $r\to 0$, such that if $x\in \Sigma$, then $\beta_{d-k}(\Sigma\cap B(x,r))\leq \sigma(r)$.  Fix $R\gg S$.  Then for any $x\in R\Sigma$,
$$\beta_{d-k}(R\Sigma\cap B(x,S)) = R\beta_{d-k}(\Sigma\cap B(x/R, S/R))\leq R\sigma(S/R)\leq  \beta.
$$
provided that $R$ is large enough.   But then we apply Theorem \ref{kdimprop} to conclude that the desired statement holds for sufficiently large $R$.  On the other hand, for small $R$, we can instead apply Theorem \ref{GCCcharac} by covering $\U_{\beta}(R\Sigma)$ by a strip of width $O_{\Sigma, \beta}(R)$.
\end{proof}

Finally, Theorem~\ref{sphere} is an immediate consequence of Theorem~\ref{manifold} obtained by taking $\Sigma=\mathbb{S}^{d-1}$.  Moreover, in this case $\sigma(r)\lesssim r^2$, and inserting the explicit form of the parameters in Theorem \ref{kdimprop} (see (\ref{constantform})) into the proof above we readily derive an explicit constant in Theorem~\ref{manifold}.

\section{Decay Rates for Damped Wave Semigroups}

We now use Theorem~\ref{sphere} to prove energy decay rates for damped fractional wave equation (Theorem~\ref{thm:wave}).
Throughout this section, for a function $f$ we set $\|f\|: = \|f\|_{L^2(\R^d)}$.  For an operator $A$ between two normed spaces, $\|A\|$ denotes the operator norm.

We view the equation (\ref{eq:1}) as the following semigroup. Setting $W(t) = (w(t),w_t(t))$, 
	\[ \dfrac d{dt}W(t) = \cala_\gamma W(t) \]
where $\cala_\gamma : H^{s} \times H^{s/2} \to \hls$ is densely defined by $A_\gamma(u_1,u_2) = (u_2,-\ds u_1 - \gamma u_2)$.
The Sobolev space $H^r$ for $r \in \R$ is defined by the decay of the Fourier transform:
	\[ H^r := \left\{ u \in L^2 : \|u\|_{H^r}^2 = \int_{\R^d} (|\xi|^2+1)^r |\wh u(\xi)|^2 \,d\xi < \infty \right\}. \]
It is easy to verify that $\cala_0$ is skew adjoint. That $W(t)=e^{tA_\gamma}$ is  a strong semigroup of contractions follows by the standard theory \cite{pazy83} since $A_\gamma$ is closed with dense range, and for any $U = (u_1,u_2) \in H^{s} \times H^{s/2}$,
	\[ \re \lip \cala_\gamma^*U,U\rip_{\hls} = \re \lip \cala_\gamma U,U\rip_{\hls} \]
	\[ = \re \lip \cala_0 U,U \rip_{\hls} - \lip \gamma u_2,u_2 \rip_{L^2} = - \lip \gamma u_2,u_2 \rip_{L^2} \le 0. \]
Notice that $E(t) = \|e^{t\cala_\gamma} (w(0),w_t(0))\|_{\hls}$, so the energy decay rates in Theorem \ref{thm:wave} can be rewritten as
	\[ \|e^{t\cala_\gamma}\cala_\gamma^{-1} \| = O(t^{\tfrac{-s}{4-2s}}) \quad 0 < s< 2, \]
	\[ \|e^{t\cala_\gamma}\| \le Ce^{-\omega t}\quad s \ge 2. \]
Once we can establish
	\begin{equation}\label{eq:gamma-res} \|(\cala_\gamma-i\lambda)^{-1}\| \le C\max\{ (|\lambda|+1)^{\tfrac 4s -2},1\}, \end{equation}
the decay rates will follow from the following two results from semigroup theory.
\begin{thm}[Gearhart-Pr\"{u}ss Test \cite{pruss84}]\label{thm:pruss}
Let $e^{tA}$ be a $C_0$-semigroup in a Hilbert space and assume there exists $M>0$ such that $\|e^{tA}\| \le M$ for all $t \ge 0$. Then, there exists $C,\omega>0$ such that
	\[ \|e^{tA}\| \le Ce^{-\omega t} \]
if and only if $i\R \subset \rho(A)$ and $\sup_{\lambda \in \R} \|(A-i\lambda)^{-1}\| < \infty$.
\end{thm}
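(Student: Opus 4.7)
The plan is to convert the resolvent bound on $i\R$ into an $L^2$-integrability statement for semigroup orbits via Plancherel's theorem in the Hilbert space $H$, and then invoke Datko's theorem for the passage to exponential decay. The forward direction $\Rightarrow$ is immediate: if $\|e^{tA}\|\le Ce^{-\omega t}$, the Laplace transform representation $(\lambda-A)^{-1}x=\int_0^\infty e^{-\lambda t}e^{tA}x\,dt$ converges absolutely on a vertical strip containing $i\R$, yielding the bound $\|(\lambda-A)^{-1}\|\le C/(\mathrm{Re}(\lambda)+\omega)$ and in particular a uniform bound on $i\R$.

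For the reverse direction, assume $M:=\sup_{\lambda\in\R}\|(A-i\lambda)^{-1}\|<\infty$ and $\|e^{tA}\|\le N$ for all $t\ge 0$. Fix $x\in D(A)$ and for small $\epsilon>0$ set $u_\epsilon(t)=e^{-\epsilon t}e^{tA}x$ for $t\ge 0$ and $u_\epsilon(t)=0$ otherwise. Then $u_\epsilon\in L^2(\R;H)$, and a direct computation of its Fourier transform yields $\widehat{u_\epsilon}(\lambda)=-(A-\epsilon-i\lambda)^{-1}x$. A Neumann-series perturbation of $(A-i\lambda)^{-1}$ gives the uniform bound $\|(A-\epsilon-i\lambda)^{-1}\|\le 2M$ for $\epsilon<1/(2M)$, hence the pointwise estimate $\|\widehat{u_\epsilon}(\lambda)\|\le 2M\|x\|$. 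To upgrade this to the $L^2$-in-$\lambda$ bound needed for Plancherel, I would use the resolvent identity $(A-\epsilon-i\lambda)^{-1}x=\tfrac{1}{\epsilon+i\lambda}\bigl[(A-\epsilon-i\lambda)^{-1}Ax-x\bigr]$, valid since $x\in D(A)$, which supplies the decay $\|\widehat{u_\epsilon}(\lambda)\|\lesssim(\|x\|+M\|Ax\|)/|\lambda|$ at high frequencies. Combining the two regimes yields a square-integrable envelope independent of $\epsilon$, and Plancherel together with Fatou's lemma as $\epsilon\to 0^+$ gives $\int_0^\infty\|e^{tA}x\|^2\,dt\lesssim \|x\|^2+\|Ax\|^2$.

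The final step invokes Datko's theorem: the $L^2$-integrability of all orbits, combined with semigroup boundedness, forces exponential decay. One argues $\|e^{tA}x\|^2\le N^2\|e^{sA}x\|^2$ for $0\le s\le t$ to first extract power decay $\|e^{tA}x\|^2\lesssim t^{-1}(\|x\|^2+\|Ax\|^2)$ on the dense subspace $D(A)$, and then promotes this to exponential decay of $\|e^{tA}\|$ by the standard submultiplicative argument (once $\|e^{tA}\|<1$ for some $t$, iteration yields the exponential rate). The main obstacle I anticipate lies in the high-frequency step of the Plancherel argument: the hypothesis gives only a uniform \emph{pointwise} bound on $(A-i\lambda)^{-1}$, with no decay as $|\lambda|\to\infty$, so it is essential to exploit $x\in D(A)$ together with the resolvent identity above to manufacture the $1/|\lambda|$ decay that turns the pointwise bound into a genuinely square-integrable one.
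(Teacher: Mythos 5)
The paper itself does not prove Theorem \ref{thm:pruss} --- it is a cited result from \cite{pruss84} --- so I will address correctness rather than compare approaches. Your forward direction is fine, and the Plancherel/resolvent--identity computation leading to $\int_0^\infty\|e^{tA}x\|^2\,dt\lesssim M^2(\|x\|^2+\|Ax\|^2)$ for $x\in D(A)$ is correct as far as it goes. The problem lies in the final step.

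You describe this as ``$L^2$-integrability of all orbits,'' but your bound holds only for $x\in D(A)$ and the constant involves the graph norm $\|x\|+\|Ax\|$. Datko's theorem requires $\int_0^\infty\|e^{tA}x\|^p\,dt<\infty$ for \emph{every} $x$ in the space (equivalently, by closed graph, a bound $\lesssim\|x\|^2$ uniform over all $x\in H$); a graph-norm bound on a dense subspace does not suffice and cannot be extended by density. Concretely, your intermediate estimate $\|e^{tA}x\|\lesssim t^{-1/2}(\|x\|+\|Ax\|)$ is exactly the statement $\|e^{tA}(1-A)^{-1}\|=O(t^{-1/2})$, which by Theorem \ref{thm:borichev} (Borichev--Tomilov, with $\alpha=2$) is equivalent only to $\|(A-i\lambda)^{-1}\|=O(\lambda^2)$ --- a strictly weaker conclusion than the uniform resolvent bound you began with. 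So at this stage you have already discarded information, and the ``submultiplicative'' conclusion does not follow: you have $\|e^{tA}x\|\to 0$ on a dense set, i.e.\ strong stability, but nothing forcing $\|e^{tA}\|<1$ for some $t$. A semigroup satisfying $\|(A-i\lambda)^{-1}\|\sim\lambda^2$ is polynomially, not exponentially, stable.

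The standard repair replaces the pointwise $1/|\lambda|$ decay (and hence the appeal to $x\in D(A)$) by testing the resolvent multiplier against a function whose Fourier transform is \emph{a priori} in $L^2$. Take $g(t)=\chi_{[0,T]}(t)\,e^{tA}x$ and $u(t)=\int_0^t e^{(t-s)A}g(s)\,ds=\min(t,T)\,e^{tA}x$, so that $u'=Au+g$ with $u(0)=0$. Multiplying by $e^{-\epsilon t}$ and taking Fourier transforms in $t$ gives $\widehat{u_\epsilon}(\sigma)=(\epsilon+i\sigma-A)^{-1}\widehat{g_\epsilon}(\sigma)$, and your Neumann-series bound $\|(\epsilon+i\sigma-A)^{-1}\|\le 2M$ now suffices: Plancherel yields $\|u_\epsilon\|_{L^2}\le 2M\|g_\epsilon\|_{L^2}\le 2M\|g\|_{L^2}$ with no loss, hence $\int_T^\infty\|e^{tA}x\|^2\,dt\le 4M^2 T^{-2}\int_0^T\|e^{tA}x\|^2\,dt\le 4M^2N^2T^{-1}\|x\|^2$. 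Adding $\int_0^T\le N^2T\|x\|^2$ and optimizing in $T$ gives $\int_0^\infty\|e^{tA}x\|^2\,dt\lesssim MN^2\|x\|^2$ for \emph{every} $x\in H$, and Datko's argument then closes the proof exactly as you intended.
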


For the polynomial decay, we use the following result from \cite[Theorem 2.4]{borichev10}:
\begin{thm}[Borichev-Tomilov \cite{borichev10}]\label{thm:borichev}
Let $e^{tA}$ be a $C_0$-semigroup on a Hilbert space. Assume there exists $M>0$ such that $\|e^{tA}\| \le M$ for all $t \ge 0$ and $i\R \subset \rho(A)$. Then for a fixed $\alpha > 0$, 
	\[ \|e^{tA}A^{-1}\|=O(t^{-1/\alpha}) \mbox{ as } t\to \infty \]
if and only if $\|(A-i\lambda)^{-1}\|=O(\lambda^\alpha)$ as $\lambda \to\infty$.
\end{thm}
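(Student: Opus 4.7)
The plan is to prove both directions of the equivalence. The implication from the resolvent bound to polynomial decay is the substantive direction, and both halves of the proof exploit the Laplace transform correspondence between the semigroup and the resolvent.

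For the direction ``resolvent bound implies decay,'' the first step is to enlarge the region on which a polynomial resolvent bound holds. From $\|(A-i\lambda)^{-1}\| \lesssim (1+|\lambda|)^{\alpha}$ on the imaginary axis, a Neumann series argument based on
$$(z-A)^{-1} = (i\lambda - A)^{-1}\sum_{n \ge 0}\bigl((i\lambda - z)(i\lambda - A)^{-1}\bigr)^{n}$$
yields a comparable bound throughout the thickened region $\Omega_c := \{z : \mathrm{Re}\,z \ge -c(1+|\mathrm{Im}\,z|)^{-\alpha}\}$, once $c$ is chosen small enough. I would then invoke the inverse Laplace representation
$$e^{tA}A^{-1}x = \frac{1}{2\pi i}\int_{\mathrm{Re}\,z=\sigma} e^{tz}(z-A)^{-1}A^{-1}x\, dz,\qquad \sigma>0,$$
valid on $D(A)$, and deform the contour leftward to the boundary of $\Omega_c$. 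The extra factor of $A^{-1}$ produces an additional decay $|z|^{-1}$, ensuring absolute convergence. Splitting the contour at the height $|\mathrm{Im}\,z| \sim t^{1/\alpha}$ and using $e^{t\,\mathrm{Re}\,z} \le e^{-ct(1+|\mathrm{Im}\,z|)^{-\alpha}}$ on the shifted piece produces the bound $\|e^{tA}A^{-1}\| = O(t^{-1/\alpha})$.

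For the converse, assuming $\|e^{tA}A^{-1}\| = O(t^{-1/\alpha})$, I would begin from the identity
$$(A-i\lambda)^{-1}A^{-1} = -\int_0^\infty e^{-i\lambda t} e^{tA}A^{-1}\, dt,$$
obtained by passing $\mathrm{Re}\,z \downarrow 0$ in the Laplace representation and using the hypothesis $i\mathbb{R} \subset \rho(A)$ to justify the boundary values. For $\alpha<1$ this integral converges absolutely and gives a bound directly. For $\alpha \ge 1$, I would iterate the resolvent identity $(A-i\lambda)^{-1} = A^{-1} + i\lambda(A-i\lambda)^{-1}A^{-1}$ together with analogous Laplace representations involving higher powers of $A^{-1}$, until integrability holds, and then unwind the iteration to recover $\|(A-i\lambda)^{-1}\| = O(|\lambda|^\alpha)$.

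The hard part is the contour-deformation step in the first direction. The resolvent bound has to be carefully propagated off the imaginary axis via Neumann series, and the geometry of the shifted contour has to be tuned so that the competition between the exponential factor $e^{t\,\mathrm{Re}\,z}$ and the polynomial factor $|z|^{\alpha}$ is exactly balanced at the crossover height $|\mathrm{Im}\,z|\sim t^{1/\alpha}$, which is what produces the optimal exponent $1/\alpha$. Control of the tails of the contour integral, where the gain from $A^{-1}$ is essential, is also a delicate point.
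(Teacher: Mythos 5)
This statement is the Borichev--Tomilov theorem, which the paper cites from \cite{borichev10} without reproducing a proof, so there is no in-paper argument to compare against; I will assess the proposal on its own terms.

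The main difficulty is in the direction ``resolvent bound implies decay,'' and the contour-deformation scheme you describe cannot prove the stated result, for a structural reason. Your argument never uses the Hilbert-space hypothesis. But the theorem is \emph{false} on general Banach spaces: the optimal conclusion one can draw from $\|(A-i\lambda)^{-1}\|=O(|\lambda|^\alpha)$ for a bounded semigroup on a Banach space is the Batty--Duyckaerts bound $\|e^{tA}A^{-1}\|=O\bigl((t/\log t)^{-1/\alpha}\bigr)$, and Borichev and Tomilov themselves exhibit Banach-space examples showing that the logarithmic loss is unavoidable. A contour shift into the region $\{\operatorname{Re} z\ge -c(1+|\operatorname{Im} z|)^{-\alpha}\}$ is exactly the Batty--Duyckaerts mechanism and, carried out correctly, yields that weaker rate. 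Removing the logarithm is the whole content of the Borichev--Tomilov theorem and requires a genuinely Hilbertian ingredient (a Plancherel/quadratic-estimate characterization of bounded semigroups on Hilbert spaces in the spirit of Gomilko and Shi--Feng, or an equivalent functional-calculus argument). There is also a more local error: with only one factor of $A^{-1}$ the integrand on the shifted contour is of size $|\operatorname{Im} z|^{\alpha-1}$ as $|\operatorname{Im} z|\to\infty$ (the exponential $e^{t\operatorname{Re} z}$ tends to $1$ there, since $\operatorname{Re} z\to 0$ along the curve), so the contour integral does \emph{not} converge absolutely; your claim that the extra $A^{-1}$ ``ensures absolute convergence'' is incorrect, and one must work with $A^{-n}$ for $n$ large and then recover information about $A^{-1}$ by interpolation.

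The converse direction also has a gap. Passing $\operatorname{Re} z\downarrow 0$ in the Laplace transform and iterating the resolvent identity controls $\|(A-i\lambda)^{-1}A^{-n}\|$ by an absolutely convergent integral once $n/\alpha>1$, and unwinding via $(A-i\lambda)^{-1}=\sum_{j<n}(i\lambda)^{j}A^{-(j+1)}+(i\lambda)^{n}(A-i\lambda)^{-1}A^{-n}$ then produces the overshoot $\|(A-i\lambda)^{-1}\|=O(|\lambda|^{n})$ with $n=\lceil\alpha\rceil$ (or larger), not the desired $O(|\lambda|^{\alpha})$. The standard way to get the sharp exponent is to use the \emph{finite-time} truncated Laplace identity $(A-i\lambda)^{-1}A^{-1}=e^{-i\lambda T}e^{TA}(A-i\lambda)^{-1}A^{-1}-\int_{0}^{T}e^{-i\lambda t}e^{tA}A^{-1}\,dt$ with the $\lambda$-dependent cutoff $T\sim|\lambda|^{\alpha}$, which allows the boundary term to be absorbed and balances the integral to produce exactly $|\lambda|^{\alpha}$. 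As written, your iteration scheme does not reach the correct power.
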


The main step in establishing (\ref{eq:gamma-res}) is the following resolvent estimate for the fractional Laplacian from sets which satisfy the Geometric Control Condition.
\begin{prop}\label{prop:res}
Let $E \subset \R^d$ satisfy the GCC and $\delta>0$. Then, there exists $C>0$ such that
	\[ \|f\|^2 \le C(1+\lambda)^{\tfrac 2s -2} \|(\ds-\lambda)f\|^2 + \|f\|_{L^2(\U_{\delta}(E))}^2 \]
for all $f \in L^2(\R^d)$, $\lambda \ge 0$.
\end{prop}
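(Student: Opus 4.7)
The plan is to split $f$ in Fourier space using the resonant level set $\{(|\xi|^2+1)^{s/2} = \lambda\}$, which for $\lambda > 1$ is the sphere of radius $R(\lambda) := \sqrt{\lambda^{2/s}-1}$. Writing $\phi(\xi) := (|\xi|^2 + 1)^{s/2}$, the multiplier of $\ds - \lambda$ vanishes precisely on this sphere, so near it we invoke Theorem~\ref{sphere}, whose essential feature is that its constant is \emph{independent} of $R$; far from the sphere, a quantitative lower bound on $|\phi - \lambda|$ lets us control the remaining piece by Plancherel.

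Concretely, fix any $\beta_0 > 0$, set $R := \max(R(\lambda), 0)$, and decompose $f = f_1 + f_2$ with $\widehat{f_1} := \widehat{f}\,\chi_{A_\lambda}$, where $A_\lambda := \{\xi \in \R^d : R - \beta_0 \leq |\xi| \leq R + \beta_0\}$. For $\lambda$ large enough that $R \geq 2\beta_0$, Theorem~\ref{sphere} applied with the $1$-GCC set $E$ gives $\|f_1\| \leq C \|f_1\|_{L^2(\U_\delta(E))}$ with $C$ independent of $\lambda$; for the remaining bounded range of $\lambda$, $\widehat{f_1}$ sits in a fixed ball and the classical Paneah-Logvinenko-Sereda theorem (the $k = d$ case of Theorem~\ref{GCCcharac}, which applies since $1$-GCC implies $d$-GCC by Fubini) gives the same estimate. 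Since $f_1 = f - f_2$, we obtain
\[
\|f_1\|^2 \leq 2 C^2 \bigl(\|f\|_{L^2(\U_\delta(E))}^2 + \|f_2\|^2\bigr).
\]

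For $f_2$ the aim is the symbol bound $|\phi(\xi) - \lambda| \geq c(1+\lambda)^{1 - 1/s}$ for all $\xi \in A_\lambda^c$, with $c = c(s,\beta_0) > 0$. Using the mean value theorem and the formula $\phi'(r) = s r (r^2 + 1)^{s/2 - 1}$, one computes $\phi'(R(\lambda)) = s R \lambda^{1 - 2/s} \gtrsim \lambda^{1 - 1/s}$ for $\lambda \geq 2$, which gives the desired bound on the outer side $|\xi| \geq R + \beta_0$; the inner side and the bounded range $\lambda \in [0,2]$ follow by analogous (simpler) case analysis using $\phi \geq 1$. Plancherel then yields $\|f_2\|^2 \leq c^{-2}(1+\lambda)^{2/s - 2}\|(\ds - \lambda)f\|^2$. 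Combining via $\|f\|^2 \leq 2\|f_1\|^2 + 2\|f_2\|^2$ and absorbing the $\|f_2\|^2$ term produced by the first estimate yields the proposition.

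The main obstacle is the uniform symbol lower bound across all $\lambda \geq 0$: the large-$\lambda$ asymptotic $\phi'(R) \gtrsim \lambda^{1 - 1/s}$ is dictated by a clean computation, but the crossover $\lambda \approx 1$ where $R$ passes through zero, together with the inner side $|\xi| \leq R - \beta_0$ when $R$ is comparable to $\beta_0$, requires a little care to ensure the same exponent survives. Once this is in hand the proof is a routine combination of the key uniform estimate from Theorem~\ref{sphere} with standard fractional-Laplacian symbol calculus.
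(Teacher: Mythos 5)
Your argument is essentially the same as the paper's: in both cases one cuts $\widehat f$ at an annulus of bounded width around the resonant sphere $\{(|\xi|^2+1)^{s/2}=\lambda\}$, applies Theorem~\ref{sphere} (the $R$-independence being the whole point) to the near-resonant piece, and bounds the off-resonant piece via the symbol lower bound $|(|\xi|^2+1)^{s/2}-\lambda|\gtrsim(1+\lambda)^{1-1/s}$; the paper parametrizes the annulus through $(|\xi|^2+1)^{1/2}$ rather than $|\xi|$ and cites a lemma from~\cite{green20decay} for the symbol bound instead of deriving it by the mean value theorem, but these are cosmetic differences.
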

\begin{proof}
First, consider the annulus $A_\mu= \{ \xi \in \R^d : \bigr| |\xi|-\mu \bigr| \le 1 \}$ for $\mu \ge 0$.   Appealing to Corollary \ref{sphere}, we find a constant 
 $C>0$ (independent of $\mu$) such that
	\begin{equation}\label{eq:uncp} \|f\| \le C\|f\|_{L^2(\U_{\delta}(E))}  \mbox{ whenever  }\spec f \subset A_\mu.\end{equation}

Now, define the Fourier restriction $\wh{P_\lambda f} := \chi_{\wt A_\lambda} \wh f$ where
	\[ \wt A_\lambda = \{ \xi \in \R^d :|(|\xi|^2+1)^{1/2}-\lambda^{1/s}| \le 1/2 \}. \]
Then, for $\lambda \ge 3^s$, $\wt A_\lambda \subset A_{\lambda^{1/s}}$ so for any $f \in L^2(\R^d)$,
\begin{equation}\begin{split}\nonumber \|f\|^2 & \le \|(I-P_\lambda)f\|^2 + C\|P_\lambda f\|^2_{L^2(\U_{\delta}(E))} \\&\le (2C+1)\|(I-P_\lambda)f\|^2 + 2C\|f\|^2_{L^2(\U_{\delta}(E))}. \end{split}\end{equation}
Finally, using the fact that $|\tau^s-\lambda| \ge c(1+\lambda)^{1-\tfrac 1s}$ if $|\tau-\lambda^{1/s}| \ge 1$ (see Lemma 1 in \cite{green20decay}), we have
	\begin{equation}\begin{split}\nonumber \|(I-P_\lambda)f\|^2 &= \int_{\wt A_\lambda^c} |\wh f(\xi)|^2 \, d\xi \\&\le C(1+\lambda)^{\tfrac 2s -2} \int_{\R^d} \left[(|\xi|^2+1)^{s/2}-\lambda\right]^2|\wh f(\xi)|^2 \, d\xi \end{split}\end{equation}
which completes the proof.
\end{proof}

To prove (\ref{eq:gamma-res}) one can follow the strategy from \cite{green20decay} which we briefly outline. First, the parallelogram identity and the positivity of $\ds$ yields
	\begin{equation}\label{eq:wave-res} c\|U\|^2_{\hls} \le (1+|\lambda|)^{\tfrac 4s -2}\|(\cala_0-i\lambda)U\|^2_{\hls} + \|u_2\|_{L^2(\U_{\delta}(E))}^2 \end{equation}
from Proposition \ref{prop:res} with $s$ replaced by $s/2$. To get the desired estimate (\ref{eq:gamma-res}), notice first that since $\gamma$ is bounded, for any $\eps >0$
	\[ \int_\ell \gamma(x) dm_1(x) \le \|\gamma\|_\infty m_1( \{\gamma \ge \eps\} \cap \ell ) + \eps L \]
all line segments $\ell$ of length $L$. Thus, if $\gamma$ satisfies (\ref{eq:gamma-gcc}), then for $\eps$ small enough, $\{\gamma \ge \eps \}$ satisfies the GCC. Moreover, since $\gamma$ is uniformly continuous, there exists $\delta>0$ such that $ |\gamma(y)| \ge \eps/2$ for $y$ in a $\delta$-neighborhood of $\{\gamma \ge \eps\}$. Thus, taking $E=\{\gamma \ge \eps\}$,
	\[ \|u_2\|_{L^2(\U_{\delta}(E))} \le 2{\eps}^{-1} \|\gamma u_2\|_{L^2(\R^d)}. \]
Finally, using the triangle inequality and the fact that
	\[ \|\gamma u_2 \|^2 \le C \lip \gamma u,u\rip = -C\re \lip (\cala_\gamma -i\lambda U,U \rip_{\hls} \]
	\[\le \eta^{-1} \|(\cala_\gamma-i\lambda) U\|_\hls^2 + \eta \|U\|_\hls^2 \]
for any $\eta>0$, we have
	\begin{align*}
		c\|U&\|_\hls^2 \le (1+|\lambda|)^{\tfrac 4s -2}\| (\cala_\gamma-i\lambda)U\|^2_\hls + ((1+|\lambda|)^{\tfrac 4s -2}+4\eps^{-2})\|\gamma u_2\|_{L^2}^2 \\
			&\le C\max\{ (1+|\lambda|)^{\tfrac 4s-2},(1+|\lambda|)^{\tfrac 8s -4},1\} \|(\cala_\gamma-i\lambda)U\|_\hls^2 + c/2 \|U\|^2_\hls
	\end{align*}
by choosing $\eta$ appropriately. This gives the desired estimate (\ref{eq:gamma-res}).

The converse is a consequence of only the exponential decay case, so we fix $s=2$. By the Gearhart-Pruss Test, one has the resolvent estimate
	\[ \|U\|_\hls \le C\|(\cala_\gamma-i\lambda)U\|_\hls \]
for all $\lambda \in \R$ and all $U=(u_1,u_2) \in \hls$. Taking $U=(\dssi u,iu)$ for some $u \in L^2(\R^d)$, this implies
	\[ c\|u\|^2 \le \|(\dss-\lambda)u\|^2 + \|\gamma u\|^2. \]

Now fix $\kap>0$ to be small, and set $\wt A_\lambda = \{ \xi \in \R^d : \bigr| (|\xi|^2+1)-\lambda \bigr| \le \kap \}$, if $\supp \wh u \subset \wt A_\lambda$, then
	\[ \|((-\Delta+1)^{1/2}-\lambda)u\| \le \kap \|u\|. \]
Consequently, if $\kap$ is small enough, then there exists $c>0$ such that for every $\lambda>0$, $c\|u\| \le \|\gamma u\|$ whenever $\spec(u)\subset \wt A_\lambda$. Since this inequality does not see modulation, the conclusion also holds with $\wt A_{\lambda}$ replaced by $ \wt A_\lambda+\xi$ for any $\xi\in \R^d$. 

Next, we notice that inside $\wt A_\lambda$, one can fit a rectangle with side lengths $c_0\kap \times c_0\lambda^{1/2} \times \ldots \times c_0 \lambda^{1/2}$ for some constant $c_0>0$. 
Letting $\lambda\to \infty$, we therefore see that $c\|u\|\leq \|\gamma u\|$ whenever $\beta_{d-1}(\spec(u))\leq c_0\kap$. But now, insofar as $\gamma$ is bounded, we find $\eps>0$ such that  $c\|u\|\leq \|\chi_{\{|\gamma|>2\eps\}} u\|$ whenever $\beta_{d-1}(\spec(u))\leq c_0\kap$.  Employing Remark \ref{nbhdrem} (with $\beta=c_0\kap$), we see that the set $\{\gamma > \eps\}$ satisfies the $1$-GCC, from which we conclude that $\gamma$ satisfies (\ref{eq:gamma-gcc}).

\bibliographystyle{abbrv}
\bibliography{/mnt/c/Users/awgre/OneDrive/Documents/new/references/journal-abbrv-short,/mnt/c/Users/awgre/OneDrive/Documents/new/references/references}

\end{document}